\documentclass[12pt]{amsart}
\usepackage{fullpage}
\usepackage{times}
\usepackage{latexsym}
\usepackage{amssymb}
\usepackage[all]{xy}

\pagestyle{headings}
\setlength{\headheight}{6.5pt}
\setlength{\headsep}{0.5cm}

\newtheorem{thm}{Theorem}[section]

\newtheorem{cor}[thm]{Corollary}

\newtheorem{lem}[thm]{Lemma}
\newtheorem{que}[thm]{Question}
\newtheorem{prob}[thm]{Problem}

\theoremstyle{remark}
\newtheorem{rem}[thm]{Remark}

\newtheorem{ex}[thm]{Example}

\usepackage{hyperref}

\newcommand{\R}{\mathbb{R}}
\newcommand{\Z}{\mathbb{Z}}

\newcommand{\Out}{\mathrm{Out}}
\newcommand{\Fix}{\mathrm{Fix}}

\title[Anosov diffeomorphisms of products II]
      {Anosov diffeomorphisms of products II. Aspherical manifolds}
\author{Christoforos Neofytidis}
\address{Section de Math\'ematiques, Universit\'e de Gen\`eve, 2-4 rue du Li\`evre, Case postale 64, 1211 Gen\`eve 4, Switzerland}
\email{Christoforos.Neofytidis@unige.ch}
\date{\today}
\subjclass[2010]{37D20, 20F34, 55R10, 57R19, 57M05, 57M10}
\keywords{Anosov diffeomorphism, direct product, aspherical manifold, trivial center, finite outer automorphism group, Hopf property}

\begin{document}

\begin{abstract}
We study aspherical manifolds that do not support Anosov diffeomorphisms. Weakening conditions of Gogolev and Lafont, we show that the product of an infranilmanifold with finitely many aspherical manifolds whose fundamental groups have trivial center and finite outer automorphism group does not support Anosov diffeomorphisms. In the course of our study, we obtain a result of independent group theoretic and topological interest 
on the stability of the Hopf property, namely, that the product of finitely many Hopfian groups with trivial center is Hopfian.
\end{abstract}

\maketitle

\section{Introduction}

In~\cite{NeoAnosov1} we showed that various classes of product manifolds do not support (transitive) Anosov diffeomorphisms, including 
in particular manifolds with non-trivial higher homotopy groups and certain aspherical manifolds. In the present paper, we consider further products of aspherical manifolds. We show that we can remove or relax conditions of Gogolev and Lafont~\cite{GL} on fundamental groups of aspherical manifolds so that their product with an infranilmanifold does not support Anosov diffeomorphisms.

Recall that a diffeomorphism $f$ of a closed oriented smooth $n$-dimensional manifold $M$ is called {\em Anosov} if there exist constants $\mu\in (0,1)$ and $C > 0$, together with a $df$-invariant splitting $TM=E^s\oplus E^u$ of the tangent bundle of $M$, such that for all $m\geq 0$
\begin{equation*}
\begin{split}
\|df^m(v)\| \leq C\mu^m\|v\|,\ v \in E^s,\\  
\|df^{-m}(v)\| \leq C\mu^m\|v\|,\ v \in E^u.
\end{split}
\end{equation*}

The invariant distributions $E^s$ and $E^u$ are called the {\em stable} and {\em unstable} distributions. If either fiber of $E^s$ or $E^u$ has dimension $k$ with $k \leq [n/2]$, then $f$ is called a {\em codimension $k$ Anosov diffeomorphism}. An Anosov diffeomorphism is called {\em transitive} if there exists a point whose orbit is dense in $M$.

Currently, all known examples of Anosov diffeomorphisms are conjugate to affine automorphisms of infranilmanifolds. A long-standing question, going back to Anosov and Smale, asks whether there are any other manifolds that support Anosov diffeomorphisms. Smale suggests, in particular, that if a manifold supports an Anosov diffeomorphism, then it must be covered by a Euclidean space~\cite{Smale}. 
Franks \cite{Fr2} and Newhouse \cite{N} proved that if a manifold admits a codimension one Anosov diffeomorphism, then it is homeomorphic to a torus. Classification results for the existence of Anosov diffeomorphisms on virtually nilpotent manifolds were obtained by Franks, Manning, Brin and others. For instance, Franks~\cite{Fr} and Manning~\cite{Man2} proved that Anosov diffeomorphisms on nilpotent manifolds are conjugate to hyperbolic automorphisms. 
Some major examples of manifolds that do not support Anosov diffeomorphisms include negatively curved manifolds~\cite{Yano,GL}, rational homology spheres~\cite{Sh} and certain manifolds with polycyclic fundamental group
~\cite{Hirsch,Po}. Strong co-homological obstructions to the existence of transitive Anosov diffeomorphisms were given in~\cite{RS}.

A significant question in this study is whether the non-existence of (transitive) Anosov diffeomorphisms on at least one of two given manifolds $M$ and $N$ carries over their direct product $M\times N$. Gogolev and Rodriguez Hertz~\cite{GH} and the author~\cite{NeoAnosov1} provided examples of such products which have non-trivial higher homotopy groups. In~\cite{NeoAnosov1} we proved, in fact, that the product of a negatively curved manifold with a rational homology sphere does not support transitive Anosov diffeomorphisms, which includes as well examples of aspherical manifolds. 
Gogolev and Lafont~\cite{GL} found conditions so that the product of an infranilmanifold with certain aspherical manifolds does not support Anosov diffeomorphisms:

\begin{thm}\label{resultGL}{\normalfont(Gogolev-Lafont \cite{GL}).} 
Let $N$ be a closed infranilmanifold and let $M$ be a closed smooth aspherical manifold whose fundamental group $\Gamma$ 
has the following three properties: 
\begin{itemize}
\item[(i)] $\Gamma$ is Hopfian, 
\item[(ii)] $\Out(\Gamma)$ is finite, 
\item[(iii)] the intersection of all maximal nilpotent subgroups of $\Gamma$ is trivial. 
\end{itemize}
Then $M \times N$ does not support Anosov diffeomorphisms.
\end{thm}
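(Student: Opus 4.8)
The plan is to argue by contradiction. Suppose $f\colon M\times N\to M\times N$ is an Anosov diffeomorphism. The guiding idea is that the ``infranil direction'' of $\pi_1(M\times N)$ is a characteristic subgroup: this will let me split off from $f$ a self--homotopy--equivalence of $M$, and the Anosov hypothesis then collides with the constraints (i)--(iii) on $\Gamma$. As a first, harmless reduction I make $N$ a nilmanifold: $f$ permutes the finitely many connected coverings of $M\times N$ of any fixed degree, so some power $f^{j}$ lifts to the finite cover $M\times N'$ with $N'$ a nilmanifold finitely covering $N$; this lift is again Anosov, and $\Gamma=\pi_1(M)$ -- hence hypotheses (i)--(iii) -- is unchanged. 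So we may assume $\Delta:=\pi_1(N)$ is finitely generated torsion--free \emph{nilpotent}. Write $G=\pi_1(M\times N)=\Gamma\times\Delta$ and let $\phi\in\Out(G)$ be induced by $f$.

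\emph{Key algebraic step: $1\times\Delta$ is characteristic in $G$.} Every automorphism permutes the maximal nilpotent subgroups of a group, so the intersection of all of them is characteristic. In a direct product $A\times B$ a nilpotent subgroup sits inside the (nilpotent) product of its two projections, so the maximal nilpotent subgroups of $G$ are exactly the subgroups $P\times Q$ with $P$ maximal nilpotent in $\Gamma$ and $Q$ maximal nilpotent in $\Delta$; since $\Delta$ is nilpotent it is its own unique such subgroup, so these are precisely the $P\times\Delta$, and their intersection is $\bigl(\bigcap_{P}P\bigr)\times\Delta=1\times\Delta$ by hypothesis (iii). (In particular $Z(\Gamma)=1$, the centre lying in every maximal nilpotent subgroup.) Hence $\phi$ preserves $1\times\Delta$, restricts to an automorphism $\psi$ of $\Delta$, and descends to $\bar\phi\in\Out\bigl(G/(1\times\Delta)\bigr)=\Out(\Gamma)$. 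Since $M=K(\Gamma,1)$ is aspherical and $1\times\Delta=\ker(\pi_1(M\times N)\to\Gamma)$ is $\phi$--invariant, the composite $M\times N\xrightarrow{f}M\times N\to M$ is homotopic to $\hat f\circ\mathrm{pr}_{M}$ for a map $\hat f\colon M\to M$ inducing $\bar\phi$; $\hat f$ is a homotopy equivalence, and by the homotopy lifting property we may take $f$ fibre--preserving over $\hat f$ with fibre maps inducing $\psi$. By (ii), $\bar\phi$ has finite order in $\Out(\Gamma)$, so after a further power we may assume $\bar\phi$ is inner, i.e.\ $\hat f\simeq\id_{M}$. (Hypothesis (i) enters to keep the induced endomorphisms of $\Gamma$ and of its finite--index subgroups -- arising in the passage to covers and in the descent -- from degenerating to proper embeddings, a point I will need to make precise.)

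\emph{The dynamical contradiction -- the main obstacle.} An Anosov diffeomorphism has only hyperbolic, hence finitely many, periodic points of each period; moreover -- the crucial dynamical input -- every twisted conjugacy (Reidemeister) class of the induced map is realized by a periodic point, so that $\#\Fix(f^{n})=R(\phi^{n})<\infty$ for all $n$. On the other hand, because $1\times\Delta$ is $\phi$--invariant, the projection $G\to\Gamma$ carries $\phi^{n}$--twisted conjugacy classes onto $\bar\phi^{n}$--twisted conjugacy classes, so $R(\phi^{n})\ge R(\bar\phi^{n})$; and for the power in which $\bar\phi$ is inner, $R(\bar\phi)$ equals the number of ordinary conjugacy classes of $\Gamma$, which is infinite since $\Gamma$ is an infinite fundamental group of a closed aspherical manifold. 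Thus $R(\phi^{n})=\infty$ for that $n$, contradicting $\#\Fix(f^{n})<\infty$.

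I expect essentially all the difficulty to lie in two places. First, establishing in the present generality that an Anosov diffeomorphism realizes \emph{every} Reidemeister class -- equivalently, that each lift of $f^{n}$ to $\widetilde M\times\widetilde N$ has a fixed point, so that $\#\Fix(f^{n})=R(\phi^{n})$ and not merely $N(f^{n})\le\#\Fix(f^{n})$; here the non--degeneracy of the Anosov splitting, the product structure of $M\times N$, and shadowing on the universal cover must be combined carefully. Second, the group--theoretic fact that $\Gamma$ has infinitely many conjugacy classes, where hypotheses (i), (iii) and the Poincar\'e duality structure of $\Gamma$ are the operative ingredients (the analogous statement fails for, say, $\Z^n$, which is exactly the case excluded by (iii)). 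As a partial check, in the sub-case $H^{1}(M;\R)\neq 0$ the argument should shortcut: $\hat f\simeq\id_{M}$ would make $1$ an eigenvalue of $f^{*}$ on $H^{1}(M\times N;\R)$, which the action of an Anosov diffeomorphism on first cohomology ought to forbid.
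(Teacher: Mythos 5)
Your algebraic reductions track the actual argument closely: identifying the maximal nilpotent subgroups of $\Gamma\times\Delta$ as the products $P\times\Delta$, hence $1\times\Delta$ as a characteristic subgroup via hypothesis (iii), is exactly how Gogolev--Lafont obtain their model automorphism $f_*(\gamma,g)=(\alpha(\gamma),\rho(\gamma)L(g))$, and the use of (ii) to make the $\Gamma$-component inner after passing to a power of $f$ is also the same. (Hypothesis (i) enters precisely to make the induced endomorphism $\alpha$ of $\Gamma$ injective --- a point you leave vague but which is routine.)

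The dynamical endgame, however, rests on a claim that is not available and that I do not believe can be established at this level of generality: that an Anosov diffeomorphism realizes \emph{every} Reidemeister class, i.e.\ $\#\Fix(f^{n})=R(\phi^{n})$. What orientability of the invariant distributions gives you (after passing to suitable covers and iterates) is that every fixed point of $f^{n}$ has the same index $(-1)^{\dim E^u}$, hence every \emph{nonempty} fixed point class is essential and $|\Lambda(f^{n})|=\#\Fix(f^{n})$. It gives no control over the \emph{empty} classes: $R(\phi^{n})$ may well be infinite while only finitely many classes contain fixed points, and then your comparison with $R(\bar\phi^{n})$ produces no contradiction. Showing that every lift of $f^{n}$ to the universal cover has a fixed point is, for a hypothetical Anosov diffeomorphism of an unclassified aspherical manifold, essentially as deep as the structural conjectures one is trying to circumvent; it is known for hyperbolic infranilmanifold automorphisms only \emph{after} the Franks--Manning classification. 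Your second acknowledged gap --- that $\Gamma$ has infinitely many conjugacy classes --- is also not free, since only Hopficity, not residual finiteness, is assumed. The actual proof avoids both issues entirely: it computes the global Lefschetz number through the product structure, $\Lambda(f^m)=\chi(M)\,\Lambda(BL)$, compares $|\Fix(f^m)|=|\Lambda(f^m)|$ with the exponential growth $|\Fix(f^m)| = re^{mh_{top}(f)} + o(e^{mh_{top}(f)})$ to conclude at once when $\chi(M)=0$, and when $\chi(M)\neq 0$ deduces that $L$ is hyperbolic and constructs a map of $M\times S^{k}$ homotopic to the identity whose iterates have unbounded Lefschetz numbers. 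You would need to replace the Reidemeister-number argument with something of this kind for the proof to go through.
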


Recall that a group $\Gamma$ is said to be {\em Hopfian} or to have the {\em Hopf property} if every surjective endomorphism of $\Gamma$ is an isomorphism. It is a long-standing problem whether the fundamental group of any aspherical manifold is Hopfian.

As a consequence of the above theorem, Gogolev and Lafont provide some concrete classes of manifolds that do not support Anosov diffeomorphisms, including products of nilpotent manifolds with finitely many negatively curved manifolds:

\begin{cor}{\normalfont(Gogolev-Lafont \cite{GL}).}\label{GLproducts}
Let $N$ be a closed infranilmanifold, and let $M_1,...,M_s$ be a family of closed smooth aspherical manifolds of dimension greater than two, each of which satisfies one of the following properties:
\begin{itemize}
\item[(1)] it has hyperbolic fundamental group, or
\item[(2)] it is an irreducible higher rank locally symmetric space of non-compact type. 
\end{itemize}
Then the product $M_1 \times\cdots\times M_s \times N$ does not support Anosov diffeomorphisms.
\end{cor}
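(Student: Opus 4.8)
The plan is to deduce the corollary from Theorem~\ref{resultGL} applied to the single aspherical manifold $M := M_1 \times \cdots \times M_s$. Writing $\Gamma_i = \pi_1(M_i)$ and $\Gamma := \pi_1(M) = \Gamma_1 \times \cdots \times \Gamma_s$, and using that a finite product of closed smooth aspherical manifolds is again closed smooth and aspherical, it suffices to check that $\Gamma$ is Hopfian, that $\Out(\Gamma)$ is finite, and that the intersection of all maximal nilpotent subgroups of $\Gamma$ is trivial; then $M \times N = M_1 \times \cdots \times M_s \times N$ supports no Anosov diffeomorphism.

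First I would record the three properties for a single factor $\Gamma_i$. If $M_i$ has hyperbolic fundamental group, then $\Gamma_i$ is torsion-free (as $M_i$ is aspherical), and since $\dim M_i \geq 3$ it is a Poincar\'e duality group of dimension $\geq 3$, hence one-ended and not virtually cyclic, so non-elementary; in particular $Z(\Gamma_i)$ is trivial. It is Hopfian by Sela's theorem for torsion-free hyperbolic groups. For finiteness of $\Out(\Gamma_i)$ I would use the Paulin--Bestvina--Feighn theory: an infinite $\Out$ of a hyperbolic group forces a splitting over a virtually cyclic subgroup, whereas a Mayer--Vietoris argument shows a $\mathrm{PD}(n)$ group with $n \geq 3$ admits no such splitting (the edge group would be a $\mathrm{PD}(n-1)$ group, impossible for a virtually cyclic --- trivial or two-ended --- group when $n \geq 3$). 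Finally, nilpotent subgroups of $\Gamma_i$ are infinite cyclic, so its maximal nilpotent subgroups are its maximal cyclic subgroups; choosing $a,b \in \Gamma_i$ generating a rank-two free subgroup, the maximal cyclic subgroups through $a$ and through $b$ meet trivially (a common nontrivial element would commute with both $a$ and $b$, whose common centralizer would then be non-cyclic), so the intersection of all maximal nilpotent subgroups is trivial. If instead $M_i$ is an irreducible higher-rank locally symmetric space of non-compact type, then $\Gamma_i$ is a torsion-free irreducible lattice in a semisimple Lie group of real rank $\geq 2$: its center, being contained in the center of the ambient group, is finite and hence trivial; $\Out(\Gamma_i)$ is finite by Mostow--Margulis rigidity; $\Gamma_i$ is finitely generated and linear, hence residually finite by Malcev and therefore Hopfian; and for any maximal nilpotent $P \leq \Gamma_i$ the intersection of the conjugates of $P$ is a normal nilpotent subgroup, which by the Margulis normal subgroup theorem is finite (hence trivial) or of finite index (impossible, as $\Gamma_i$ is not virtually nilpotent), so once more the intersection of all maximal nilpotent subgroups is trivial.

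Next I would pass to the product. In both cases $\Gamma_i$ is directly indecomposable: a hyperbolic group cannot contain $\Z \times \Z$, and a proper nontrivial direct factor of a higher-rank irreducible lattice is ruled out by the Margulis normal subgroup theorem together with torsion-freeness. Hence $Z(\Gamma) = \prod_i Z(\Gamma_i) = 1$. Since the $\Gamma_i$ are centerless and directly indecomposable, a Krull--Remak--Schmidt argument shows that any automorphism of $\Gamma$ permutes the canonical factors $\Gamma_1, \dots, \Gamma_s$, so $\Out(\Gamma)$ is an extension of a subgroup of the symmetric group $S_s$ by $\prod_i \Out(\Gamma_i)$, hence finite. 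For the maximal nilpotent subgroups: if $P_i \leq \Gamma_i$ is maximal nilpotent, then $P_1 \times \cdots \times P_s$ is maximal nilpotent in $\Gamma$, because any nilpotent subgroup containing it projects onto a nilpotent subgroup of each $\Gamma_i$, forcing equality by maximality; hence the intersection of all maximal nilpotent subgroups of $\Gamma$ is contained in $\prod_i \big(\bigcap_{P_i} P_i\big) = \prod_i 1 = 1$. Finally, for the Hopfian property I would invoke the stability of the Hopf property under finite direct products of groups with trivial center --- the group-theoretic statement announced in the abstract --- applied to the Hopfian centerless groups $\Gamma_1, \dots, \Gamma_s$.

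Having verified conditions (i), (ii) and (iii) for $\Gamma$, the corollary follows from Theorem~\ref{resultGL}. The main obstacle is the Hopfian property of $\Gamma$: the Hopf property of the individual factors is not obviously enough, and one genuinely needs its product-stability for centerless groups; the second delicate point is the finiteness of $\Out(\Gamma)$, which rests on the uniqueness up to reordering of the decomposition of $\Gamma$ into indecomposable centerless factors.
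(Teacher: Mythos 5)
Your proposal is correct and follows the same route the paper attributes to Gogolev--Lafont: verify hypotheses (i)--(iii) of Theorem~\ref{resultGL} for $\Gamma=\Gamma_1\times\cdots\times\Gamma_s$ and apply that theorem to $M_1\times\cdots\times M_s$ and $N$, with the product-stability arguments for the Hopf property and for $\Out$ being essentially those of Section~\ref{s:stabilityHopf}. The only cosmetic slip is the parenthetical claim that an edge group of a splitting of a $\mathrm{PD}(n)$ group would be a $\mathrm{PD}(n-1)$ group --- the Mayer--Vietoris argument actually yields only $\mathrm{cd}\geq n-1$, which is all you need to rule out a virtually cyclic edge group when $n\geq 3$.
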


The main goal of this paper is to show that Theorem \ref{resultGL} holds under weaker assumptions.  It is easy to observe that if the intersection of all maximal nilpotent subgroups of a group is trivial, then the center of this group must be trivial as well. Our main result is that Theorem \ref{resultGL} still holds if we relax assumption (iii) of that theorem to the assumption that the center $C(\Gamma)$ of $\Gamma$ is trivial, and without assumption (i) that $\Gamma$ is Hopfian.

\begin{thm}\label{t:main} \
Let $N$ be a closed infranilmanifold and let $M$ be a closed smooth aspherical manifold with fundamental group $\Gamma$ such that 
\begin{itemize}
\item[(a)] $\Out(\Gamma)$ is finite, 
\item[(b)] $C(\Gamma)$ is trivial. 
\end{itemize}
Then $M \times N$ does not support Anosov diffeomorphisms. 
\end{thm}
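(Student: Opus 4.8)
The plan is to argue by contradiction. Suppose $X:=M\times N$ carries an Anosov diffeomorphism $f$, write $\Lambda:=\pi_1(N)$ so that $\pi_1(X)=\Gamma\times\Lambda$, and consider the automorphism $f_*$ of $\pi_1(X)$ induced by $f$ (well defined up to inner automorphisms, and an automorphism because $f$ is a homotopy equivalence). We assume $\dim M\geq 1$, equivalently $\Gamma\neq 1$, as is implicit in the statement; then (a)--(b) force $\dim M\geq 3$ and $\Gamma$ infinite and non-abelian. I would first reduce to the case that $N$ is a nilmanifold: the Fitting subgroup $\Lambda_0\trianglelefteq\Lambda$ is characteristic, torsion-free, nilpotent and of finite index, the corresponding finite cover $N_0$ of $N$ is a nilmanifold, and $M\times N_0\to M\times N$ is the cover associated with $\Gamma\times\Lambda_0\leq\Gamma\times\Lambda$. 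Since $\pi_1(X)$ is finitely generated it has only finitely many subgroups of index $[\Lambda:\Lambda_0]$, which $f_*$ permutes (note $\Gamma\times\Lambda_0$ is normal, so $f_*(\Gamma\times\Lambda_0)$ is independent of the representative), so $f^k_*(\Gamma\times\Lambda_0)=\Gamma\times\Lambda_0$ for some $k$ and $f^k$ lifts to an Anosov diffeomorphism of $M\times N_0$. Replacing $(X,f)$ by this, $\Lambda$ is now torsion-free nilpotent while $M$, $\Gamma$ and hypotheses (a)--(b) are unchanged.

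The key point --- where $C(\Gamma)=1$ takes over the role of Gogolev--Lafont's hypothesis (iii) --- is that $1\times\Lambda$ is a \emph{characteristic} subgroup of $\Gamma\times\Lambda$. I would prove by induction on $i$ that $1\times Z_i(\Lambda)$ is characteristic, where $Z_\bullet(\Lambda)$ is the upper central series: for $i=1$ this is $C(\Gamma\times\Lambda)=C(\Gamma)\times C(\Lambda)=1\times C(\Lambda)$, using $C(\Gamma)=1$; and if $1\times Z_i(\Lambda)$ is characteristic, then the centre of $(\Gamma\times\Lambda)/(1\times Z_i(\Lambda))=\Gamma\times\bigl(\Lambda/Z_i(\Lambda)\bigr)$ is $1\times\bigl(Z_{i+1}(\Lambda)/Z_i(\Lambda)\bigr)$, again by $C(\Gamma)=1$, so its preimage $1\times Z_{i+1}(\Lambda)$ is characteristic. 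Taking $i$ to be the nilpotency class of $\Lambda$ gives the claim. Hence $f_*$ preserves $1\times\Lambda$ and descends to an automorphism $\bar f_*$ of $\Gamma=(\Gamma\times\Lambda)/(1\times\Lambda)$; because this descent is automatic from characteristicity, no Hopficity of $\Gamma$ is needed, which is how hypothesis (i) is dispensed with.

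Next I would exploit (a): $\bar f_*$ has finite order in $\Out(\Gamma)$, so after replacing $f$ by a suitable power $f^m$ and adjusting the representative of $f^m_*$ by an inner automorphism of $\Gamma\times\Lambda$ (i.e. changing the lift of $f^m$), I may assume $\bar f^m_*=\operatorname{id}_\Gamma$. Writing $f^m_*(\gamma,1)=(\gamma,\psi(\gamma))$ and $f^m_*(1,\lambda)=(1,\phi(\lambda))$ and using that $f^m_*$ is an automorphism preserving $1\times\Lambda$, one finds that $\psi\colon\Gamma\to\Lambda$ is a homomorphism, $\phi\in\operatorname{Aut}(\Lambda)$, and (comparing $(\gamma,1)(1,\lambda)$ with $(1,\lambda)(\gamma,1)$) $\psi(\Gamma)$ centralises $\phi(\Lambda)=\Lambda$, so $\psi(\Gamma)\subseteq C(\Lambda)$; thus $f^m_*(\gamma,\lambda)=(\gamma,\psi(\gamma)\phi(\lambda))$ with $\psi\colon\Gamma\to C(\Lambda)$. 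Since $\Gamma$ is non-abelian it does not embed into the abelian group $C(\Lambda)$, so $\ker\psi\neq 1$; choosing $1\neq\gamma_0\in\ker\psi$ yields $f^m_*(\gamma_0,1)=(\gamma_0,1)$ --- the Anosov diffeomorphism $f^m$ fixes a nontrivial element of $\pi_1(X)$, equivalently (by asphericity of $X$) $f^m$ is homotopic to a fibrewise self-map of the trivial bundle $M\times N\to M$ that induces the identity on $M$.

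Deriving a contradiction from this last statement is the step I expect to be the main obstacle, but it is not where the present improvement lies: it is the same dynamical input used by Gogolev--Lafont, namely that an Anosov diffeomorphism cannot act on its fundamental group fixing a nontrivial conjugacy class --- a self-map homotopic to a fibrewise map over a positive-dimensional base and acting trivially on the base would have a neutral direction along the $M$-factor, incompatible with hyperbolicity. In practice I would establish this exactly as in \cite{GL}: either through Lefschetz numbers (the above structure of $f^m_*$ gives $L(f^{mk})=\chi(M)\,L(h^k)$ for all $k$ and a suitable $h\colon N\to N$ realising $\phi$, which vanishes identically when $\chi(M)=0$, whereas $L(f^k)\neq 0$ for infinitely many $k$ when $f$ is Anosov), or through the Ruelle--Sullivan current of the unstable foliation (an $f^m_*$-eigenvector in $H_{\dim E^u}(X;\R)$ with eigenvalue off the unit circle) together with Poincar\'e duality against the stable foliation when $\chi(M)\neq 0$, or by lifting $f^m$ to the $\Z$-cover of $X$ determined by $\langle(\gamma_0,1)\rangle$ and playing off expansiveness against the contractibility of the stable and unstable manifolds of the two fixed points it then acquires. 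Thus essentially all of the novelty sits in the group-theoretic reductions above, while the dynamical conclusion is imported from \cite{GL}.
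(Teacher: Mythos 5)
Your argument is essentially the paper's: the induction up the upper central series of $\pi_1(N)$ showing that $1\times\pi_1(N)$ is characteristic in $\Gamma\times\pi_1(N)$ (using only $C(\Gamma)=1$, and hence needing no Hopficity of $\Gamma$) is exactly the content and the proof of Lemma \ref{l:weaken}, the use of finiteness of $\Out(\Gamma)$ to make the $\Gamma$-component inner after passing to a power is the same, and both you and the paper then import the dynamical endgame ($\Lambda(f^{mk})=\chi(M)\Lambda(h^{k})$, with the separate and much harder treatment of $\chi(M)\neq 0$) wholesale from \cite{GL}. One caveat worth recording: your slogan that the imported input is that ``an Anosov diffeomorphism cannot fix a nontrivial conjugacy class in $\pi_1$'' is not what \cite{GL} actually prove (nor a known general fact) --- their $\chi(M)\neq0$ case runs through hyperbolicity of $L$ via Manning's formula, Franks' semiconjugacy and a self-map of $M\times S^{k}$ homotopic to the identity with unbounded Lefschetz numbers --- but since you explicitly defer to their argument rather than to this slogan, nothing is lost.
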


Gogolev and Lafont prove Corollary \ref{GLproducts} by showing that every finite product of fundamental groups of the two types of families 
of manifolds listed in Corollary \ref{GLproducts} fulfils assumptions (i)-(iii) of Theorem \ref{resultGL}. Those groups have trivial center and thus their product fulfils assumption (b) of Theorem \ref{t:main} as well. In this paper we extend Corollary \ref{GLproducts} to any family of manifolds that fulfils the two conditions of Theorem \ref{t:main}:

\begin{cor}\label{c:extendGL} \
Let $N$ be a closed infranilmanifold and let $M_1,...,M_s$ be closed smooth aspherical manifolds with fundamental groups $\Gamma_i$, $i=1,...,s$, such that 
\begin{itemize}
\item[(a)] $\Out(\Gamma_i)$ is finite, and
\item[(b)] $C(\Gamma_i)$ is trivial. 
\end{itemize}
Then $M_1\times\cdots\times M_s \times N$ does not support Anosov diffeomorphisms.
\end{cor}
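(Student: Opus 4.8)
The plan is to reduce the statement directly to Theorem~\ref{t:main}, applied to the single manifold $M := M_1 \times \cdots \times M_s$. This product is again a closed smooth aspherical manifold, its fundamental group is $\Gamma := \Gamma_1 \times \cdots \times \Gamma_s$, and $M_1 \times \cdots \times M_s \times N = M \times N$; so it suffices to verify that $\Gamma$ satisfies hypotheses (a) and (b) of Theorem~\ref{t:main}. Hypothesis (b) is immediate, since the center of a direct product is the product of the centers: $C(\Gamma) = C(\Gamma_1) \times \cdots \times C(\Gamma_s) = 1$. Everything therefore comes down to (a), the finiteness of $\Out(\Gamma)$.

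For (a) I would first break each factor into indecomposable pieces. Since $\Gamma_i = \pi_1(M_i)$ with $M_i$ a closed aspherical manifold, $\Gamma_i$ is torsion-free and $\mathrm{cd}_{\Q}(\Gamma_i) \le \dim M_i < \infty$; as $\mathrm{cd}_{\Q}$ is additive over direct products and every nontrivial torsion-free group has $\mathrm{cd}_{\Q} \ge 1$, any decomposition of $\Gamma_i$ into nontrivial direct factors has at most $\dim M_i$ terms. (In fact, since $C(\Gamma_i)=1$ there is no infinite cyclic direct factor, so each indecomposable piece is a Poincar\'e duality group of dimension $\ge 2$, but this refinement is not needed.) Choosing a decomposition of maximal length gives $\Gamma_i = H_{i,1} \times \cdots \times H_{i,k_i}$ with each $H_{i,j}$ directly indecomposable and, being a direct factor of the centerless group $\Gamma_i$, also centerless.

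Next I would show that each $H = H_{i,j}$ has $\Out(H)$ finite, and then assemble the global picture. Writing $\Gamma_i = H \times K$, the map $(\alpha,\beta)\mapsto \alpha\times\beta$ embeds $\mathrm{Aut}(H)\times\mathrm{Aut}(K)$ into $\mathrm{Aut}(\Gamma_i)$ and carries $\mathrm{Inn}(H)\times\mathrm{Inn}(K)$ onto $\mathrm{Inn}(\Gamma_i)$, because a conjugation on a product splits as a product of conjugations; hence $\Out(H)\times\Out(K)$ embeds into $\Out(\Gamma_i)$, which is finite by hypothesis, so $\Out(H)$ is finite. Now collect the $H_{i,j}$ over all $i,j$ and group them by isomorphism type: $\Gamma \cong H_1^{\,n_1}\times\cdots\times H_t^{\,n_t}$ with $H_1,\dots,H_t$ pairwise non-isomorphic, directly indecomposable, centerless, and each with finite outer automorphism group. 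Because all decompositions here have finite length, a Krull--Remak--Schmidt argument for centerless groups applies: the decomposition into indecomposables is unique up to order and isomorphism, the only nontrivial indecomposable direct factors of $\Gamma$ are the coordinate factors, and every automorphism of $\Gamma$ permutes them, necessarily preserving the isomorphism types. This yields $\mathrm{Aut}(\Gamma)\cong\prod_{j=1}^{t}\bigl(\mathrm{Aut}(H_j)\wr S_{n_j}\bigr)$, and dividing by $\mathrm{Inn}(\Gamma)\cong\prod_{j}\mathrm{Inn}(H_j)^{\,n_j}$ gives $\Out(\Gamma)\cong\prod_{j=1}^{t}\bigl(\Out(H_j)\wr S_{n_j}\bigr)$, which is finite. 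Hypothesis (a) holds, and Theorem~\ref{t:main} then finishes the proof.

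The step I expect to be the real work is the structural identification $\mathrm{Aut}(\Gamma)\cong\prod_{j}(\mathrm{Aut}(H_j)\wr S_{n_j})$: this rests on Krull--Remak--Schmidt uniqueness for centerless groups together with the observation that the decompositions in play have bounded length — which is exactly what finite cohomological dimension buys us — and one must be careful that the indecomposable direct factors are unique as subgroups, not merely up to isomorphism, since that is what makes the wreath-product description, and hence the passage to outer automorphism groups, go through. The remaining ingredients (triviality of $C(\Gamma)$, finiteness of each $\Out(H_j)$, and the bookkeeping with $\mathrm{Inn}$) are routine.
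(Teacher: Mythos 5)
Your reduction to Theorem~\ref{t:main} applied to $M=M_1\times\cdots\times M_s$ is exactly the paper's skeleton, and your verification of (b) matches; the difference lies entirely in how $\Out(\Gamma_1\times\cdots\times\Gamma_s)$ is shown to be finite. The paper reuses the elementary argument from its proof of Theorem~\ref{t:Hopf}: for a surjective endomorphism $\phi$, the images $\pi_i\phi(\Gamma_j)$ commute elementwise and generate $\Gamma_i$, so (after reducing to indecomposable factors) triviality of the centers forces $\phi$ to permute the factors, giving a homomorphism $\Out(\Gamma_1\times\cdots\times\Gamma_s)\to\mathrm{Sym}(s)$ with kernel $\Out(\Gamma_1)\times\cdots\times\Out(\Gamma_s)$. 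You instead invoke Krull--Remak--Schmidt uniqueness for centerless groups of finite decomposition length, which is a heavier classical tool but yields the sharper structural statement $\Out(\Gamma)\cong\prod_j\bigl(\Out(H_j)\wr \mathrm{Sym}(n_j)\bigr)$; your observation that $\Out(H)\times\Out(K)$ embeds in $\Out(H\times K)$, so that indecomposable factors inherit finiteness of $\Out$, is correct and is a point the paper leaves implicit. You are also more careful than the paper about why the decomposition into indecomposables terminates, though your appeal to additivity of $\mathrm{cd}_{\Q}$ over direct products is more than is needed and not the cleanest justification: it suffices to note that a product of $m$ nontrivial torsion-free groups contains $\Z^m$, so $m\le \mathrm{cd}(\Gamma_i)\le\dim M_i$. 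The trade-off is that the paper's route is self-contained and produces Theorem~\ref{t:Hopf} (Hopf stability) as a byproduct, whereas yours outsources the key uniqueness-of-factors step to a standard theorem but delivers a complete description of $\mathrm{Aut}(\Gamma)$; both are valid.
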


Prominent examples of aspherical manifolds whose fundamental groups have trivial center are given by those manifolds with non-zero Euler characteristic~\cite{Got}. Thus Theorem \ref{t:main} and Corollary \ref{c:extendGL} imply the following:

\begin{cor}\label{c:non-zeroEuler}
Let $M_1,...,M_s$ be closed smooth aspherical manifolds with non-zero Euler characteristic such that $\Out(\pi_1(M_i))$ is finite for all $i=1,...,s$. Then $M_1\times\cdots\times M_s \times N$ does not support Anosov diffeomorphisms for any closed infranilmanifold $N$.
\end{cor}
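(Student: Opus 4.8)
The plan is to obtain Corollary~\ref{c:non-zeroEuler} as an immediate application of Corollary~\ref{c:extendGL}; the only thing to check is that the hypothesis ``$\chi(M_i)\neq 0$'' supplies condition (b) there, namely triviality of $C(\pi_1(M_i))$, since condition (a) is assumed outright.

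First I would invoke Gottlieb's theorem~\cite{Got}: if $X$ is a finite aspherical complex — in particular a closed aspherical manifold — with non-zero Euler characteristic, then the center of $\pi_1(X)$ is trivial. Since each $M_i$ is a closed smooth aspherical manifold, $\pi_1(M_i)$ is a torsion-free group of finite cohomological dimension, so there is no ambiguity between ``finite center'' and ``trivial center'' here; in any case Gottlieb's conclusion is exactly $C(\pi_1(M_i))=1$. Hence every $\Gamma_i:=\pi_1(M_i)$ satisfies both (a) $\Out(\Gamma_i)$ finite and (b) $C(\Gamma_i)=1$.

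Then I would simply apply Corollary~\ref{c:extendGL} to the family $M_1,\dots,M_s$ together with the infranilmanifold $N$ to conclude that $M_1\times\cdots\times M_s\times N$ supports no Anosov diffeomorphism. There is no real obstacle in this last step: all the substantive content — the stability of the Hopf property under finite products of centerless groups, the finiteness of $\Out$ and triviality of the center of the product $\Gamma_1\times\cdots\times\Gamma_s$, and the dynamical/topological argument ruling out Anosov diffeomorphisms on a product of an infranilmanifold with such an aspherical manifold — is already packaged into Theorem~\ref{t:main} and Corollary~\ref{c:extendGL}. The one point worth double-checking is that Gottlieb's hypotheses genuinely apply, i.e.\ that a closed aspherical manifold is (homotopy equivalent to) a finite $K(\pi,1)$-complex; this is standard. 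So the proof reduces to citing~\cite{Got} to verify (b) and then quoting Corollary~\ref{c:extendGL}.
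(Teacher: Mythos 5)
Your proposal is correct and follows exactly the paper's route: Gottlieb's theorem~\cite{Got} gives $C(\pi_1(M_i))=1$ from $\chi(M_i)\neq 0$, and then Corollary~\ref{c:extendGL} applies directly. Nothing further is needed.
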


\begin{rem}\label{r:HopfEuler}
In contrast to the examples of Corollary \ref{GLproducts}, it is unknown whether every aspherical manifold with non-zero Euler characteristic has Hopfian fundamental group.
\end{rem}

In order to conclude Corollary \ref{c:extendGL} from Theorem \ref{t:main}, we will show that $\Out(\Gamma_1\times\cdots\times\Gamma_s)$ is finite. In the course of this proof, we obtain the following purely group theoretic result of independent interest on the stability of the Hopf property under taking direct products, which generalizes (and simplifies) the idea of the proof for the Hopf property of products of the groups of Corollary \ref{GLproducts}:

\begin{thm}\label{t:Hopf}
If $\Gamma_1,...,\Gamma_s$ are Hopfian groups with trivial center, then $\Gamma_1\times\cdots\times \Gamma_s$ is Hopfian. 
\end{thm}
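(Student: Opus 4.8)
The plan is to induct on $s$; the inductive step reduces to the case $s=2$, namely that if $A$ and $B$ are Hopfian groups with trivial center then $A\times B$ is Hopfian. This suffices because in the product $\Gamma_1\times\cdots\times\Gamma_s=(\Gamma_1\times\cdots\times\Gamma_{s-1})\times\Gamma_s$ the first factor is Hopfian by the inductive hypothesis and has trivial center (the center of a direct product is the product of the centers), while $\Gamma_s$ is Hopfian with trivial center by assumption. So fix an epimorphism $\varphi\colon\Gamma\to\Gamma$ with $\Gamma=A\times B$, and write $A$ and $B$ also for the two factor subgroups. Since $A,B\trianglelefteq\Gamma$, the images $\varphi(A)$ and $\varphi(B)$ are normal in $\varphi(\Gamma)=\Gamma$; they commute because $[A,B]=1$, and together they generate $\Gamma$; and $\varphi(A)\cap\varphi(B)$, commuting with both of them, commutes with all of $\Gamma$ and hence is trivial since $C(\Gamma)=C(A)\times C(B)=1$. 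Therefore $\Gamma=\varphi(A)\times\varphi(B)$ as an internal direct product, so $\ker\varphi=\ker(\varphi|_A)\times\ker(\varphi|_B)$, and it is enough to prove that the epimorphisms $\varphi|_A\colon A\to\varphi(A)$ and $\varphi|_B\colon B\to\varphi(B)$ are injective. The same argument applied to the iterate $\varphi^n$ gives $\Gamma=\varphi^n(A)\times\varphi^n(B)$ and $\ker\varphi^n=\ker(\varphi^n|_A)\times\ker(\varphi^n|_B)$ for every $n$.

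Second, I would record the structural lemma that does the work: if $G=G_1\times G_2$ with $C(G_1)=C(G_2)=1$ and $D\le G$ is a direct factor, with complement $D'=C_G(D)$, then the projection of $D$ to $G_i$ equals $D\cap G_i$, each $D\cap G_i$ is a direct factor of $G_i$, and $D=(D\cap G_1)\times(D\cap G_2)$; these are short projection arguments using that the $G_i$ are centerless. Applied to the direct factors $\varphi^n(A)$ and $\varphi^n(B)$ of $\Gamma=A\times B$, it expresses each $\varphi^n(A)$ as $P_n\times Q_n$ with $P_n:=\varphi^n(A)\cap A$ a direct factor of $A$ and $Q_n:=\varphi^n(A)\cap B$ a direct factor of $B$; in particular $P_n$ and $Q_n$ are Hopfian, being direct factors of the Hopfian groups $A$ and $B$, so the "moving parts" of the quotients $\varphi^n(A)$ always live inside the two fixed Hopfian groups $A$ and $B$ (and likewise for $\varphi^n(B)$).

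The heart of the proof — and what I expect to be the main obstacle — is to show that the ascending chain $\ker\varphi\le\ker\varphi^2\le\cdots$ stabilises. Granting this, the proof concludes by an elementary observation: if the chain stabilises at $N$, then $\ker\varphi^N=\ker\varphi^{2N}$, and for $x\in\ker\varphi^N$ surjectivity of $\varphi^N$ lets us write $x=\varphi^N(y)$, whence $y\in\ker\varphi^{2N}=\ker\varphi^N$ and therefore $x=\varphi^N(y)=1$; thus $\varphi^N$, and a fortiori $\varphi$, is injective. By the first paragraph, stabilisation of $(\ker\varphi^n)$ is equivalent to stabilisation of the chains $\ker(\varphi^n|_A)\trianglelefteq A$ and $\ker(\varphi^n|_B)\trianglelefteq B$, i.e.\ to the epimorphisms $A\twoheadrightarrow\varphi^n(A)$ and $B\twoheadrightarrow\varphi^n(B)$ becoming isomorphisms for all large $n$.

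To force this stabilisation I would feed the Hopf property into the structural picture through two facts: a Hopfian group $H$ has no proper quotient isomorphic to itself (otherwise $H\twoheadrightarrow H/N\xrightarrow{\sim}H$ is a surjective non-injective endomorphism), and consequently $H$ admits no epimorphism onto $H\times C$ with $C\neq 1$ — such an epimorphism composed with the projection to $H$ would be an automorphism of $H$, so the epimorphism itself is an isomorphism $H\cong H\times C$, and then the other projection $H\times C\to H$ composed with it is a surjective endomorphism of $H$ with kernel isomorphic to $C\neq 1$, a contradiction. Combining these with the second paragraph, which confines each $\varphi^n(A)=P_n\times Q_n$ to be built from direct factors of the two fixed Hopfian centerless groups $A$ and $B$, one argues that the $P_n$ and $Q_n$, hence the quotients $\varphi^n(A)$, cannot keep changing. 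Making this precise is a Krull–Remak–Schmidt–type rigidity argument carried out without chain conditions on $\Gamma$, with the Hopf property playing the role those conditions would; this is the step I expect to be delicate. (It is equivalent to showing that $\{\varphi(A),\varphi(B)\}\cong\{A,B\}$ as unordered pairs of abstract groups, after which applying the Hopf property of $A$ and $B$ to $\varphi|_A$ and $\varphi|_B$ — or, in the case where $\varphi$ interchanges the two factors up to isomorphism, to a suitable composite of $\varphi|_A$ and $\varphi|_B$ — finishes the proof.)
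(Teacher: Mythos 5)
Your setup is correct as far as it goes: the internal decomposition $\Gamma=\varphi(A)\times\varphi(B)$ (commuting normal images that generate $\Gamma$ and meet in a central, hence trivial, subgroup), the structural lemma on direct factors of a product of centerless groups, and the observation that stabilisation of the chain $\ker\varphi\le\ker\varphi^2\le\cdots$ forces injectivity are all valid. But there is a genuine gap exactly where you flag one: the ``Krull--Remak--Schmidt--type rigidity argument'' showing that $\{\varphi(A),\varphi(B)\}\cong\{A,B\}$ (equivalently, that the kernel chain stabilises) is the entire content of the theorem, and you do not carry it out. Without it, nothing prevents $\varphi(A)=P\times Q$ with $P$ a proper nontrivial direct factor of $A$ and $Q$ a proper nontrivial direct factor of $B$; the iterates $\varphi^n$ could then keep re-splitting and re-shuffling direct factors, and in the absence of a chain condition on direct decompositions your two Hopf facts (no proper self-quotient, no epimorphism onto $H\times C$ with $C\neq 1$) do not obviously terminate the process.

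The paper closes this gap with one reduction you are missing: assume without loss of generality that each $\Gamma_i$ is directly \emph{indecomposable}, which is harmless because a direct factor of a Hopfian centerless group is again Hopfian and centerless (the paper takes for granted that this splitting process terminates). Then, for each $i$, the subgroups $\pi_i\varphi(\Gamma_1),\dots,\pi_i\varphi(\Gamma_s)$ of $\Gamma_i$ pairwise commute, generate $\Gamma_i$, and have central hence trivial pairwise intersections, so they exhibit $\Gamma_i$ as an internal direct product; indecomposability forces exactly one of them to be all of $\Gamma_i$ and the rest to be trivial. Hence $\varphi$ induces a permutation of the factors, some power $\varphi^k$ carries each $\Gamma_i$ onto itself, and Hopficity of each $\Gamma_i$ makes $\varphi^k$, hence $\varphi$, injective --- no kernel-chain or Remak--Krull--Schmidt machinery is needed. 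If you insert the indecomposability reduction into your own framework, your case analysis does close up: $\varphi(A)\in\{1,\,A,\,B,\,A\times B\}$, the extreme cases are excluded by your ``no epimorphism onto $H\times C$'' observation, and the two remaining cases are handled by applying the Hopf property to $\varphi|_A$ and $\varphi|_B$ or to their composite.
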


This result has also the following interesting consequence with respect to a topological problem of Hopf~\cite[Problem 5.26]{Kirby}; see Section \ref{s:stabilityHopf}:

\begin{cor}\label{c:Hopf}
Let $M_1,...,M_s$ be closed oriented aspherical manifolds whose fundamental groups are Hopfian and have trivial center. Then every self-map of $M_1\times\cdots\times M_s$ of degree $\pm 1$ is a homotopy equivalence.
\end{cor}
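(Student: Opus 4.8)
\textbf{Proof proposal for Corollary \ref{c:Hopf}.}
The plan is to reduce the statement to Theorem \ref{t:Hopf} via two standard facts: that a degree $\pm1$ map of closed oriented manifolds is surjective on $\pi_1$, and that a map between aspherical CW complexes inducing an isomorphism on $\pi_1$ is a homotopy equivalence. Write $M=M_1\times\cdots\times M_s$ and $\Gamma=\Gamma_1\times\cdots\times\Gamma_s=\pi_1(M)$. Since a finite product of aspherical spaces is aspherical, $M$ is a closed oriented $K(\Gamma,1)$. By hypothesis each $\Gamma_i$ is Hopfian with trivial center, so Theorem \ref{t:Hopf} gives that $\Gamma$ is Hopfian.

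Next I would show that any self-map $f\colon M\to M$ with $\deg f=\pm1$ induces a surjection $f_*\colon\Gamma\to\Gamma$. Let $H=\im f_*$ and let $p\colon M_H\to M$ be the (orientable) covering corresponding to $H$; then $f$ lifts to $\tilde f\colon M\to M_H$ with $f=p\circ\tilde f$. If $[\Gamma:H]=\infty$, then $M_H$ is a connected noncompact $n$-manifold, hence $H_n(M_H;\Z)=0$, so $f_*=p_*\circ\tilde f_*$ vanishes in degree $n=\dim M$, contradicting $\deg f=\pm1$. If $[\Gamma:H]=k<\infty$, then $p$ has degree $k$ and $\pm1=\deg f=k\cdot\deg\tilde f$ forces $k=1$, i.e.\ $H=\Gamma$. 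Thus $f_*$ is a surjective endomorphism of the Hopfian group $\Gamma$, and therefore an isomorphism.

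Finally, since $M$ is a $K(\Gamma,1)$, the map $f$ induces isomorphisms on all homotopy groups (on $\pi_1$ by the previous step, and trivially on $\pi_n$ for $n\ge2$), so $f$ is a weak homotopy equivalence of CW complexes and hence a homotopy equivalence by Whitehead's theorem. The substantive input is the Hopf property of the product $\Gamma$, which is precisely Theorem \ref{t:Hopf}; everything else is routine algebraic topology, so I do not anticipate a genuine obstacle, the only point requiring care being the use of orientability to make the degree of the covering $p$ well defined in the $\pi_1$-surjectivity argument.
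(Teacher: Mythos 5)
Your proof is correct and follows exactly the same route as the paper: degree $\pm 1$ forces $\pi_1(f)$ to be surjective, Theorem \ref{t:Hopf} makes $\pi_1(M_1\times\cdots\times M_s)$ Hopfian so $\pi_1(f)$ is an isomorphism, and asphericity plus Whitehead's theorem finishes. The paper simply asserts the $\pi_1$-surjectivity as standard, whereas you spell out the covering-space argument; otherwise the two proofs are identical.
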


\subsection*{Outline of the paper}
In Section \ref{s:weaken}, we explain why we can remove or relax certain conditions of Theorem \ref{resultGL}, obtaining therefore Theorem \ref{t:main}. In Section \ref{s:stabilityHopf}, we discuss the Hopf property for groups with trivial center and prove Corollary \ref{c:extendGL}. Finally, in Section \ref{s:examples}, we discuss examples of aspherical manifolds which do not support Anosov diffeomorphisms.

\subsection*{Acknowledgments}
I am grateful to Andrey Gogolev and to Jean-Fran\c cois Lafont for several fruitful discussions, as well as to Karel Dekimpe for pointing out to me Lemma \ref{l:polycyclic} and to Pierre de la Harpe for useful comments. The support of the Swiss National Science Foundation, under project ``Volumes, growth and characteristic numbers",  is also gratefully acknowledged.

\section{Weakening the assumptions of Theorem \ref{resultGL}}\label{s:weaken}

In this section, we show that Theorem \ref{resultGL} still holds if we remove assumption (i) on the Hopf property of $\Gamma$ and replace assumption (iii) of that theorem with assumption (b) of Theorem \ref{t:main}. We point out that the proof of Theorem \ref{resultGL} in~\cite{GL} consists of several steps which will not be repeated here. Instead, we will simplify some of those group theoretic steps and drop out unnecessary assumptions. 

\subsection{Intersection of all maximal nilpotent subgroups vs trivial center}

First, let us indicate that indeed assumption (iii) of Theorem \ref{resultGL} 
\begin{center}
{\em ``the intersection of all maximal nilpotent subgroups of $\Gamma$ is trivial"}
\end{center}
 is at least as strong as assumption (b) of Theorem \ref{t:main} 
 \begin{center}
 {\em ``the center of $\Gamma$ is trivial"}.
 \end{center}

\begin{lem}\label{l:maxtrivial}
Let $\Gamma$ be a group such that the intersection of all its maximal nilpotent subgroups is trivial. Then $C(\Gamma)=1$.
\end{lem}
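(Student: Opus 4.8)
The plan is to show the contrapositive in spirit: take any nontrivial central element $z \in C(\Gamma)$ and produce a maximal nilpotent subgroup containing it, so that $z$ lies in the intersection of all such subgroups. First I would observe that the cyclic group $\langle z \rangle$ is abelian, hence nilpotent, and since $z$ is central, $\langle z \rangle$ commutes with everything in $\Gamma$. The key point is that any subgroup containing $z$ together with extra elements that still commute with $z$ remains a candidate for being nilpotent; more usefully, given \emph{any} maximal nilpotent subgroup $H$ of $\Gamma$, the subgroup $\langle H, z \rangle$ generated by $H$ and $z$ is again nilpotent, because $z$ is central in $\Gamma$ (so $\langle H, z\rangle = H\langle z\rangle$, and the central extension of a nilpotent group by a central cyclic subgroup is nilpotent — concretely, the lower central series of $H\langle z\rangle$ coincides with that of $H$ past the first term, since $[H\langle z\rangle, H\langle z\rangle] = [H,H]$). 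By maximality of $H$ we conclude $z \in H$.

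Carrying this out step by step: (1) Let $z \in C(\Gamma)$ be arbitrary; we want $z \in H$ for every maximal nilpotent subgroup $H \leq \Gamma$. (2) Fix such an $H$ — note $H$ is nontrivial unless $\Gamma$ is trivial, in which case there is nothing to prove, and in general maximal nilpotent subgroups exist containing any given nilpotent subgroup by a Zorn's lemma argument, though here I only need one fixed $H$ to derive the conclusion. (3) Form $K = H\langle z \rangle$. Since $z$ is central, $\langle z \rangle \trianglelefteq K$ and $K/\langle z\rangle$ is a quotient of $H$, hence nilpotent; moreover $\langle z \rangle \leq C(K)$, so $K$ is a central extension of a nilpotent group and is therefore nilpotent (alternatively: $\gamma_2(K) = [K,K] = [H,H] = \gamma_2(H)$ because $z$ is central, so $\gamma_{c+1}(K) = \gamma_{c+1}(H) = 1$ where $c$ is the nilpotency class of $H$). (4) Thus $K$ is a nilpotent subgroup containing $H$, so $K = H$ by maximality, whence $z \in H$. (5) Since $H$ was an arbitrary maximal nilpotent subgroup, $z$ lies in the intersection of all of them, which is trivial by hypothesis; hence $z = 1$. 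As $z$ was an arbitrary central element, $C(\Gamma) = 1$.

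The only genuine content is the elementary claim in step (3) that adjoining a central element to a nilpotent subgroup keeps it nilpotent; everything else is bookkeeping. I do not expect any real obstacle — the statement is essentially a warm-up observation, and the slickest phrasing is simply: if $z$ were a nontrivial central element, then for every maximal nilpotent $H$ the group $H\langle z\rangle$ is nilpotent and contains $H$, hence equals $H$, so $z \in \bigcap H = 1$, a contradiction. One small point worth stating carefully is why $H\langle z\rangle$ is a subgroup at all (it is, since $\langle z\rangle$ normalizes $H$ — indeed centralizes it — so the product set is a subgroup), and why its commutator subgroup does not grow; both follow immediately from centrality of $z$.
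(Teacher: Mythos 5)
Your proof is correct and takes essentially the same route as the paper: adjoin the central element $z$ to a maximal nilpotent subgroup $H$, observe that $H\langle z\rangle$ is still nilpotent, and let maximality force $z\in H$, so that $z$ lies in the (trivial) intersection. Your lower-central-series computation $\gamma_i(H\langle z\rangle)=\gamma_i(H)$ for $i\ge 2$ is in fact slightly cleaner than the paper's phrasing via the product $N_j\times\langle z\rangle$ (which has to treat separately the case where a power of $z$ lies in $N_j$); the only quibble is that your parenthetical Zorn's-lemma claim that maximal nilpotent subgroups always exist is false in general (the union of a chain of nilpotent subgroups need not be nilpotent), but as you note you never use it.
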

\begin{proof}
We can clearly assume that $\Gamma$ is not nilpotent itself. We will show that
\begin{equation}\label{centercontained}
C(\Gamma)\subseteq \mathop{\bigcap}_{\substack{N_i\subset\Gamma }} N_i,
\end{equation}
where $N_i$ are all maximal nilpotent subgroups of $\Gamma$. 

If the center of $\Gamma$ is trivial, then there is nothing to show. Let $1\neq x\in C(\Gamma)$ and suppose that there exists a maximal nilpotent subgroup $N_j$ of $\Gamma$ such that $x\notin N_j$. Consider the semi-direct product $N_j^x:=N_j\rtimes\langle x\rangle$, where $x$, being central in $\Gamma$, acts trivially on $N_j$ by conjugation. In particular, $N_j^x$ is the direct product $N_j\times\langle x \rangle$. Therefore $N_j^x$ is nilpotent being a direct product of nilpotent groups (note that if a power of $x$ lies in $N_j$, then $N_j$ is of finite index in $N_j\times\langle x \rangle$ and $x$ has finite order). Since $N_j$ is maximal nilpotent and $\Gamma$ is not nilpotent, we conclude that $x=1$. This contradiction completes the proof.
\end{proof}

\begin{rem}
For torsion-free virtually polycyclic groups the converse of Lemma \ref{l:maxtrivial} is also true; see Lemma \ref{l:polycyclic}.
\end{rem}

\subsection{The model isomorphism}

Consider now two groups $\Gamma$ and $G$, such that $C(\Gamma)$ is trivial and $G$ is finitely generated nilpotent. Suppose that there is an isomorphism
\[
f_*\colon \Gamma\times G\longrightarrow \Gamma\times G,
\]

We begin with the following lemma, whose proof is straightforward and is left to the reader:

\begin{lem}\label{l:center}
If $\varphi$ is an automorphism of a group $H$, then $\varphi(C(H))=C(H)$.
\end{lem}

We now show that $f_*$ has the form 
\begin{equation}\label{formofautomorphism}
f_*(\gamma, g) = (\alpha(\gamma), \rho(\gamma)L(g)),  \  (\gamma,g)\in\Gamma\times G,
\end{equation}
where $\alpha\colon \Gamma\longrightarrow\Gamma$ and $L\colon G \longrightarrow G$ are automorphisms and $\rho\colon \Gamma\longrightarrow C(G)$ is a homomorphism into the center of $G$. 

By Lemma \ref{l:center}, we have $f_*(C(\Gamma\times G))=C(\Gamma\times G)$. Since moreover $C(\Gamma)=1$, we deduce that $f_*(C(\Gamma\times G))=f_*(1\times C(G))=1\times C(G)$. Therefore, if $g\in C(G)$, then
\[
f_*(1,g)\in 1\times C(G)\subset \Gamma\times G.
\]
Let now $g\in G\setminus C(G)$ and $f_*(1,g)=(\beta,g')\in \Gamma\times G$. Denote the quotient $G/C(G)$ by $G_1$ which is again a finitely generated nilpotent group; see for example~\cite[Proposition 6.19]{NeoIIPP}. Let the induced automorphism
\begin{eqnarray*}
 {f_1}_*\colon \Gamma\times G_1 &\longrightarrow & \Gamma\times G_1\\
     {[(\gamma,h)]}_1 &\mapsto & {[f_*(\gamma,h)]}_1, \ 
\end{eqnarray*}   
where ${[(\gamma,h)]}_1=(\gamma,[h]_1)\in\Gamma\times G_1$. As before, we have that ${f_1}_*(1\times C(G_1))=1\times C(G_1)$. Therefore, if $[g]_1\in C(G_1)$, then ${f_1}_*(1,[g]_1)\in 1\times C(G_1)$ and so $\beta=1$. Thus
\[
f_*(1,g)\in 1\times G\subset \Gamma\times G.
\]
Next assume that $[g]_1\in G_1\setminus C(G_1)$ and consider the nilpotent quotient group $G_2:=G_1/C(G_1)$. We now use the induced automorphism 
\begin{eqnarray*}
 {f_2}_*\colon \Gamma\times G_2 &\longrightarrow & \Gamma\times G_2\\
     {[(\gamma,h)]}_2 &\mapsto & {[{f_1}_*(\gamma,[h]_1)]}_2 
\end{eqnarray*}   
where ${[(\gamma,h)]}_2=(\gamma,[h]_2)\in\Gamma\times G_2$, to show that $f_*(1,g)\in 1\times G$ when $[g]_2\in C(G_2)$, because ${f_2}_*(1\times C(G_2))=1\times C(G_2)$. We continue the process as above and since $G$ is finitely generated nilpotent, there is a $k$ such that $G_k$ is finitely generated Abelian. In particular, 
\[
C(\Gamma\times G_k)=1\times G_k,
\]
showing that $f_*(1,g)\in 1\times G$ for every $[g]_k\in G_k$. We deduce that $f_*(1\times G)=1\times G$, i.e. $f_*$ restricts to an automorphism of $G$. Let us denote this automorphism by $L$.

Next, for each $\gamma\in\Gamma$ we have 
\[
f_*(\gamma,1)=(\alpha(\gamma),\rho(\gamma))
\]
for some homomorphisms $\alpha\colon\Gamma\longrightarrow\Gamma$ and $\rho\colon\Gamma\longrightarrow G$. Since $(\gamma,1)$ and $(1,g)$ commute with each other, we deduce that $\rho(\gamma)$ commutes with $L(g)$ for all $\gamma\in\Gamma$ and $g\in G$, and therefore $\rho\colon\Gamma\longrightarrow C(G)$ as required. 

The form of $f_*$ is given by
\[
f_*(\gamma,g)=f_*((\gamma,1)(1,g))=f_*(\gamma,1)f_*(1,g)=(\alpha(\gamma),\rho(\gamma)L(g)).
\]
The homomorphism $\alpha\colon\Gamma\longrightarrow\Gamma$ is clearly surjective, so it remains to show that it is injective as well. Let $\gamma\in\Gamma$ such that $\alpha(\gamma)=1$. Then $f_*(\gamma,1)=(1,\rho(\gamma))$. But $\rho(\gamma)\in C(G)$, and so Lemma \ref{l:center} implies that
\[
f_*(\gamma,1)=f_*(1,g),
\]
for some $g\in C(G)$. Since $f_*$ is an isomorphism, we deduce that $\gamma=1$, which means that $\alpha$ is injective.

We have now proved the following, which is Lemma 6.2 in \cite{GL}, but without the assumption that $\Gamma$ is Hopfian and weakening the assumption that the intersection of all maximal nilpotent subgroups of $\Gamma$ is trivial to $C(\Gamma)=1$.

\begin{lem}\label{l:weaken}
If $\Gamma$ has trivial center and $G$ is finitely generated nilpotent, then any automorphism $f_*\colon \Gamma\times G \longrightarrow \Gamma\times G$ has the form given in (\ref{formofautomorphism}), where  $\alpha\colon \Gamma\longrightarrow\Gamma$ and $L\colon G \longrightarrow G$ are automorphisms and $\rho\colon \Gamma\longrightarrow C(G)$ is a homomorphism into the center of $G$.
\end{lem}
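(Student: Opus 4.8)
The plan is to exploit the fact that automorphisms preserve the center, together with an induction along the upper central series of the nilpotent factor. Since $C(\Gamma)=1$ and the center of a direct product is the product of the centers, we have $C(\Gamma\times G)=1\times C(G)$. By Lemma \ref{l:center}, $f_*$ restricts to an automorphism of this center, so $f_*(1\times C(G))=1\times C(G)$. This already shows that $f_*(1,g)\in 1\times G$ whenever $g\in C(G)$, and it serves as the base case for the key step below.

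The heart of the argument, and the step I expect to be the main obstacle, is to upgrade this from the center $C(G)$ to all of $G$, i.e.\ to prove $f_*(1\times G)=1\times G$. The device is the tower of iterated central quotients $G_0:=G$ and $G_{i+1}:=G_i/C(G_i)$: since $G$ is finitely generated nilpotent, each $G_i$ is again finitely generated nilpotent and, after finitely many steps, $G_k$ is finitely generated abelian. At each stage $f_*$ descends to an automorphism $f_{i*}$ of $\Gamma\times G_i$, and because $C(\Gamma)=1$ forces $C(\Gamma\times G_i)=1\times C(G_i)$, Lemma \ref{l:center} gives $f_{i*}(1\times C(G_i))=1\times C(G_i)$. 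The delicate point is to check that each $f_{i*}$ is well defined --- this relies on the previous map preserving $1\times C(G_{i-1})$, so that quotienting by the center is legitimate --- and that it remains an automorphism. Writing $f_*(1,g)=(\beta,g')$, I would then push this equality down the tower: whenever $[g]_i$ lies in $C(G_i)$, preservation of $1\times C(G_i)$ by $f_{i*}$ forces the $\Gamma$-component $\beta$ to be trivial. Since $G_k$ is abelian, every $g\in G$ eventually satisfies $[g]_k\in C(G_k)=G_k$, so $\beta=1$ for all $g$; hence $f_*(1\times G)\subseteq 1\times G$, and applying the same reasoning to $f_*^{-1}$ (or invoking surjectivity of $f_*$) yields equality. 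I denote the resulting restriction by $L\colon G\to G$.

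With the factor $G$ pinned down, the remaining structure falls out quickly. I would write $f_*(\gamma,1)=(\alpha(\gamma),\rho(\gamma))$, which defines homomorphisms $\alpha\colon\Gamma\to\Gamma$ and $\rho\colon\Gamma\to G$. Because $(\gamma,1)$ and $(1,g)$ commute, their images commute, so $\rho(\gamma)$ commutes with $L(g)$ for every $g$; as $L$ is surjective onto $G$, this places $\rho(\gamma)$ in $C(G)$, i.e.\ $\rho\colon\Gamma\to C(G)$. Multiplying, $f_*(\gamma,g)=f_*(\gamma,1)\,f_*(1,g)=(\alpha(\gamma),\rho(\gamma)L(g))$, which is exactly the form (\ref{formofautomorphism}).

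It then remains to verify that $\alpha$ is an automorphism. Surjectivity is immediate from the surjectivity of $f_*$ once $L$ is known to be onto $G$. For injectivity I would use a final center trick: if $\alpha(\gamma)=1$ then $f_*(\gamma,1)=(1,\rho(\gamma))$ with $\rho(\gamma)\in C(G)$, so this element lies in $1\times C(G)=C(\Gamma\times G)$ and, by Lemma \ref{l:center}, equals $f_*(1,g)$ for some $g\in C(G)$; injectivity of $f_*$ then gives $(\gamma,1)=(1,g)$, whence $\gamma=1$. Thus $\alpha$ is an automorphism, and $L$ is one as well, completing the proof.
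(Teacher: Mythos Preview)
Your proposal is correct and follows essentially the same approach as the paper: preserve the center via Lemma~\ref{l:center}, descend through the iterated central quotients $G_i=G_{i-1}/C(G_{i-1})$ of the nilpotent factor until reaching an abelian quotient to force $f_*(1\times G)=1\times G$, then read off $\alpha$, $\rho$, $L$ and use the center trick for injectivity of $\alpha$. If anything, you are slightly more explicit than the paper about the well-definedness of the induced maps $f_{i*}$ and about upgrading the inclusion $f_*(1\times G)\subseteq 1\times G$ to equality.
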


Suppose now that there exists an Anosov diffeomorphism
$$f\colon M\times N\longrightarrow M\times N,$$
where $M$ and $N$ are as in Theorem \ref{t:main}. After possibly passing to a finite cover of $N$ and some finite power of $f$, we 
 may assume that $N$ is a nilpotent manifold; see also \cite[Lemma 6.1]{GL}. 
 For simplicity, let us write $\Gamma=\pi_1(M)$ and $G=\pi_1(N)$. 

Since $C(\Gamma)$ is trivial and $G$ is finitely generated nilpotent, Lemma \ref{l:weaken} implies that the induced by $f$ automorphism is given by (\ref{formofautomorphism}), i.e.
\[
f_*(\gamma, g) = (\alpha(\gamma), \rho(\gamma)L(g)),  \  (\gamma,g)\in\Gamma\times G.
\]
Moreover, since $\Out(\Gamma)$ is finite, we can assume, after possibly taking some further power of $f$, that $\alpha$ is an inner automorphism of $\Gamma$.  

Finally, passing to (fiberwise) coverings of $M$ and $N$ and to further iterates of $f$, if necessary, we may assume that the invariant distributions are oriented, $f$ preserves the orientation and $f_*$ still has the form given by (\ref{formofautomorphism}).

\begin{rem}
It is worth pointing out that the model isomorphism given by (\ref{formofautomorphism}) must generally be constructed before passing to orientation finite coverings. In~\cite[page 3011/3012]{GL}, the passing to finite coverings of $M\times N$ and to iterates of $f$, in order to achieve orientability of the invariant distributions and so that $f$ preserves the orientation, is done at the beginning of the proof. But passing to finite coverings before bringing $f_*$ into the model form (\ref{formofautomorphism}) does not seem to guarantee that we will be able to obtain (\ref{formofautomorphism}) at a later stage of the proof, because the group theoretic assumptions on $\Gamma$ might not hold for every finite index subgroup of $\Gamma$. Nevertheless, note that passing to orientation coverings at the beginning of the proof does not affect Corollary \ref{GLproducts}, since any finite index subgroup of the fundamental groups of the manifolds mentioned there fulfils all three assumptions of Theorem \ref{resultGL}; see~\cite[Section 7]{GL}. 
\end{rem}

\subsection{Sketch of the remaining steps of the proof of Theorem \ref{t:main}}

The rest of the proof of Theorem \ref{t:main} is identical to that of Theorem \ref{resultGL} given in~\cite{GL}. We only mention briefly the major steps and refer to~\cite{GL} for the details.

Since $f$ has oriented invariant distributions, the number of fixed points of powers of $f$ can be computed using the Lefschetz number $\Lambda(f)$ of $f$, i.e. the sum of indices of the fixed points of $f$. Namely, for each $m\geq 1$,
\begin{equation}\label{eq.FixLef}
|\Fix(f^m)|=|\Lambda(f^m)|.
\end{equation}
Note that $|\Fix(f^m)|$ can be computed by 
\begin{equation}\label{eq.FixAnosov}
|\Fix(f^m)| = re^{mh_{top}(f)} + o(e^{mh_{top}(f)}),
\end{equation}
where $h_{top}(f)$ is the topological entropy of $f$ and $r$ is the number of transitive basic sets with entropy equal to $h_{top}(f)$. If $f$ is transitive, then $r=1$.  See~\cite{Smale} for more details.

Using a model Anosov diffeomorphism, obtained by the group theoretic reductions given in the preceding subsection, Gogolev and Lafont show that, for each $m\geq 1$,
\begin{equation}\label{eq.LefEul}
\Lambda(f^m)=\chi(M)\Lambda(BL),
\end{equation}
where $BL$ denotes the diffeomorphism of $N$ induced by $L$. The latter equation, together with (\ref{eq.FixLef}) and (\ref{eq.FixAnosov}), already complete the proof of Theorem \ref{t:main} when $\chi(M)=0$. 

If $\chi(M)\neq0$, then (\ref{eq.FixLef}), (\ref{eq.FixAnosov}) and (\ref{eq.LefEul}) give, for all $m\geq1$,
\[
\frac{r}{\chi(M)}e^{mh_{top}(f)} + o(e^{mh_{top}(f)}) = \prod_{\lambda\in \mathrm{Spec}(L)}|1-\lambda^m|,
\]
where the right hand side is due to Manning~\cite{Man2} (the product is taken over all eigenvalues, counted with multiplicity, of the Lie algebra automorphism induced by $L$). This implies that the eigenvalues of $L$ are not roots of unity, and so $L$ is an Anosov automorphism; cf.~\cite{Man2} and \cite[Lemma 6.5]{GL}. Then the last algebraic reduction in~\cite{GL} is that $f_*$ has the form
\[
f_*(\gamma, g) = (\alpha(\gamma), L(g)),  \  (\gamma,g)\in\Gamma\times G.
\]
This, together with Franks' work~\cite{Fr} and further computations on locally maximal hyperbolic sets, allows Gogolev and Lafont to construct a new map
\[
\hat{f}\colon M\times S^k\longrightarrow M\times S^k, \ k:=\dim (N),
\]
which is homotopic to the identity, but has the same set of periodic points as (a lift of) $f$ and the Lefschetz number of ${f}^m$ is unbounded as $m$ goes to infinity. This contradiction finishes the proof of Theorem \ref{t:main}.

\section{Stability of the Hopf property}\label{s:stabilityHopf}

In this section we prove Corollary \ref{c:extendGL}. To this end, we need to show that if $\Gamma_1,...,\Gamma_s$ are groups with trivial center and $\Out(\Gamma_i)$ is finite for every $i=1,...,s$, then $\Gamma_1\times\cdots\times\Gamma_s$ has trivial center and $\Out(\Gamma_1\times\cdots\times\Gamma_s)$ is finite as well. The first property is straightforward, so we focus in showing that $\Out(\Gamma_1\times\cdots\times\Gamma_s)$ is finite. In the course of this study, we obtain results of independent interest on the stability of the Hopf property under taking products and on a problem of Hopf for self-maps of degree $\pm 1$.

\subsection{The Hopf property}

The concept of Hopfian groups has its origins in a purely topological problem of H. Hopf:

\begin{prob}{\normalfont(\cite[Problem 5.26]{Kirby}).}\label{p;Hopf}
Is every self-map of a closed oriented manifold $M$ of degree $\pm 1$ a homotopy equivalence?
\end{prob}

For aspherical manifolds, an even stronger form of Hopf's problem was proposed by the author:

\begin{prob}{\normalfont(\cite[Problem 1.2]{NeoHopfdegrees}).}\label{p;Hopfaspherical}
Is every self-map of non-zero degree of a closed oriented aspherical manifold either a homotopy equivalence or homotopic to a non-trivial covering?
\end{prob}

\begin{rem}
Recall also that the Borel conjecture predicts that every homotopy equivalence between two closed aspherical manifolds is homotopic to a homeomorphism.
\end{rem}

The Hopf property has been studied extensively, and for some classes of groups it has been determined completely whether they are Hopfian or not. For instance, it is clear that every simple group is Hopfian and a classical theorem of Mal'cev says that every finitely generated residually finite group is Hopfian. At the other end, a well-known example of a non-Hopfian group is the Baumslag-Solitar group $B(2,3)$. 

Hopfian groups consist a delicate class of groups, being generally not closed under passing to subgroups or quotient groups. An important question is the study of the closure of the Hopf property under product operations. Dey and H. Neumann~\cite{D-HN} proved that the free product of two finitely generated Hopfian groups is Hopfian. The case of direct products is more subtle. Jones~\cite{Jones} proved that there is a non-Hopfian finitely generated group which is isomorphic to the product of two Hopfian groups. Before that, Corner~\cite{Corner} found examples of Hopfian Abelian groups $G,H$ such that $G\times H$ is not Hopfian and even an example of a Hopfian Abelian group $A$ such that $A\times A$ is not Hopfian. Hirshon investigated extensively the problem of the stability of the Hopf property under taking direct products and obtained several sufficient conditions so that the product of two Hopfian groups is again Hopfian; see for example~\cite{Hir1}.

On the one hand, direct products of finitely generated Abelian groups are Hopfian. On the other hand, the known examples of non-Hopfian direct products suggest that the amount of commutativity in at least one of the factors plays an important role. In particular, some of Hirshon's examples point out the role of the center of one of the factors. In this paper we prove the following result on the stability of the Hopf property:

\begin{thm}{\normalfont(Theorem \ref{t:Hopf}).}
If $\Gamma_1,...,\Gamma_s$ are Hopfian groups with trivial center, then $\Gamma_1\times\cdots\times \Gamma_s$ is Hopfian. 
\end{thm}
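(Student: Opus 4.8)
The plan is to prove the statement by induction on $s$, so it suffices to treat the case $s = 2$: if $\Gamma$ and $\Delta$ are Hopfian with trivial center, then $\Gamma \times \Delta$ is Hopfian. Let $\phi\colon \Gamma\times\Delta\to\Gamma\times\Delta$ be a surjective endomorphism; I must show $\phi$ is injective. The first and key structural step is to show that $\phi$ respects the direct-product decomposition up to the obvious ambiguity, i.e. that $\phi$ carries $\Gamma\times 1$ into $\Gamma\times 1$ and $1\times\Delta$ into $1\times\Delta$ (possibly after swapping the factors, if $\Gamma$ and $\Delta$ happen to be abstractly related, but in the presence of trivial centers and surjectivity this swapping issue can be handled). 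The tool for this is the centralizer/center machinery already used in Section \ref{s:weaken}: because $C(\Gamma\times\Delta) = C(\Gamma)\times C(\Delta) = 1$, one cannot use the center directly, but one can use the fact that $\Gamma\times 1$ and $1\times\Delta$ are each the centralizer of the other in $\Gamma\times\Delta$, and more importantly that they are the maximal normal subgroups with certain properties. Concretely, for $x\in\Gamma\times\Delta$ write $x = (x_\Gamma, x_\Delta)$; the element $(\gamma,1)$ is characterized (up to the factor swap) by commuting with a ``large'' subgroup. I would set $H = \phi(\Gamma\times 1)$ and $K = \phi(1\times\Delta)$; these are normal subgroups of $\Gamma\times\Delta$ (images of normal subgroups under a surjection) with $[H,K]=1$ and $HK = \Gamma\times\Delta$, and I claim that any such pair must be of the form $(\Gamma_0 \times \Delta_0)$-type aligned with the factors — this is where triviality of the centers is essential, since $H\cap K \subseteq C(HK) = C(\Gamma\times\Delta) = 1$, so $\Gamma\times\Delta = H\times K$ internally, and then a lemma on internal direct product decompositions of $\Gamma\times\Delta$ (with both factors centerless) forces $\{H,K\} = \{\Gamma\times 1,\ 1\times\Delta\}$.

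Once that alignment is established, $\phi$ restricts to surjective endomorphisms $\phi_\Gamma\colon \Gamma\to\Gamma$ and $\phi_\Delta\colon\Delta\to\Delta$ — here I use that $\phi(\Gamma\times 1)=\Gamma\times 1$ is onto because, writing $\phi = (\phi_1,\phi_2)$ in block form, surjectivity of $\phi$ together with the alignment and $[\Gamma\times 1, 1\times\Delta]=1$ forces the ``diagonal'' blocks to be onto. Then Hopfianness of $\Gamma$ and of $\Delta$ gives that $\phi_\Gamma$ and $\phi_\Delta$ are isomorphisms. The final step is to upgrade this to injectivity of $\phi$ itself: an arbitrary element of $\ker\phi$ has the form $(\gamma,\delta)$ with $\phi(\gamma,1)\cdot\phi(1,\delta) = 1$; since $\phi(\gamma,1)\in\Gamma\times 1$ and $\phi(1,\delta)\in 1\times\Delta$ and these two subgroups intersect trivially, we get $\phi(\gamma,1) = 1$ and $\phi(1,\delta)=1$, whence $\phi_\Gamma(\gamma)=1$ and $\phi_\Delta(\delta)=1$, so $\gamma = \delta = 1$ by injectivity of $\phi_\Gamma,\phi_\Delta$. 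Thus $\phi$ is injective, completing the case $s=2$, and the induction step is identical since a finite product of centerless groups is centerless and (by the $s=2$ case applied with one factor being $\Gamma_1\times\cdots\times\Gamma_{s-1}$, Hopfian by induction, and the other $\Gamma_s$) Hopfian.

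The main obstacle is the alignment step: unlike the situation in Lemma \ref{l:weaken}, one of the factors is no longer assumed nilpotent or finitely generated, so the inductive ``peeling of upper central quotients'' argument used there is unavailable. Everything hinges on the purely algebraic fact that a group which is a direct product of two centerless groups in two ways, $\Gamma\times\Delta = H\times K$, must have $\{H,K\}$ a permutation of $\{\Gamma,\Delta\}$ when $\Gamma$ and $\Delta$ are \emph{directly indecomposable}. In general $\Gamma$ and $\Delta$ need not be directly indecomposable, so the correct statement is a Krull–Remak–Schmidt type uniqueness for the centerless case: centerless groups have the property that internal direct factors are unique as subgroups (not merely up to isomorphism), because a direct factor of a centerless group is precisely a normal subgroup that is its own centralizer's complement — more precisely, if $G = A\times B$ with $C(G)=1$, then $B = C_G(A)$ is determined by $A$, and the decomposition-refinement argument shows the factors of any two decompositions into indecomposables coincide. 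I would isolate this as a preliminary lemma (``for a group with trivial center, a direct complement to a given subgroup, if it exists, is unique''), prove it in a line from $C_G(A) \cap A \subseteq C(G)$ and $C_G(A)\supseteq B$, and then deduce the alignment. This lemma, together with the centerless hypothesis forcing $H\cap K=1$, is the whole content; the rest is the routine bookkeeping sketched above.
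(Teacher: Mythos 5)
Your overall strategy is the same as the paper's: show that a surjective endomorphism $\phi$ must (essentially) respect the factors, pass to a power to kill the permutation of isomorphic factors, and then invoke Hopficity factor by factor; the final bookkeeping (trivial intersection of the images forces the kernel to vanish) is also the same. The difference is entirely in how the ``alignment'' step is justified, and that is exactly where your proposal has a gap. The lemma you actually state and prove --- that in a centerless group a direct complement of a given direct factor is unique, via $C_G(A)=B$ --- is genuinely a one-liner, but it does not yield the alignment: it tells you that \emph{if} $H=\Gamma\times 1$ then $K=1\times\Delta$, not that $H$ is one of the original factors to begin with. For that you need the full common-refinement theorem for direct decompositions of centerless groups (a Krull--Remak--Schmidt-type statement), which you name but do not prove, and which must moreover be applied to factors $H=\phi(\Gamma\times 1)$ and $K=\phi(1\times\Delta)$ that are only quotients of $\Gamma$ and $\Delta$ and so need not be indecomposable even when $\Gamma$ and $\Delta$ are. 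Two further loose ends: (i) no decomposition-uniqueness statement rules out the degenerate case $H=\Gamma\times\Delta$, $K=1$; excluding it requires the Hopf property (one checks that $\Gamma\twoheadrightarrow\Gamma\times\Delta$ composed with the projection back to $\Gamma$ is an isomorphism, so $\Gamma\cong\Gamma\times\Delta$, contradicting indecomposability unless $\Delta=1$); (ii) your induction forces the two-factor case to be proved with one factor \emph{decomposable}, which makes the refinement issue unavoidable rather than reducible away.

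The paper avoids all of this by arguing directly with projections, all $s$ factors at once: after reducing to indecomposable $\Gamma_i$, for each $i$ the subgroups $\pi_i\phi(\Gamma_1),\dots,\pi_i\phi(\Gamma_s)$ of $\Gamma_i$ commute elementwise and generate $\Gamma_i$, the multiplication map from the product of any one of them with the product of the rest has kernel contained in $C(\Gamma_i)=1$, so $\Gamma_i$ decomposes internally as a product of these images; indecomposability of $\Gamma_i$ then kills all but one image. This is precisely the computation hiding inside the refinement theorem you would need, carried out only where it is needed. If you want to salvage your write-up, the cleanest fix is to replace the appeal to Krull--Remak--Schmidt by this explicit projection argument (and to add the Hopfian argument excluding the degenerate case); as written, the key step of your proof is asserted rather than established.
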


\begin{proof}
Without loss of generality we may assume that each of the factors $\Gamma_i$ is not decomposable as a direct product, because if some $\Gamma_i$ is a direct product, then every factor of $\Gamma_i$ is again Hopfian and has trivial center. Let 
\[
\phi\colon \Gamma_1\times\cdots\times\Gamma_s\longrightarrow\Gamma_1\times\cdots\times\Gamma_s
\]
be a surjective homomorphism. For each $i$, consider the projection $\pi_i\colon\Gamma_1\times\cdots\times\Gamma_s\longrightarrow\Gamma_i$, and let $\pi_i\phi(\Gamma_{1}),...,\pi_i\phi(\Gamma_{s})$ be the images of the factors of $\Gamma_1\times\cdots\times\Gamma_s$ in $\Gamma_i$ under $\phi$. 

Since $\phi$ is surjective, there exists a factor $\Gamma_{j_0}$, $j_0\in\{1,...,s\}$, such that $\pi_i\phi(\Gamma_{j_0})$ is not trivial. Also, the subgroups $\pi_i\phi(\Gamma_{1}),...,\pi_i\phi(\Gamma_{s})\subset\Gamma_i$ commute elementwise with each other and their union generates $\Gamma_i$, because $\phi$ is surjective. Now, $\pi_i\phi(\Gamma_{j_0})$ and $\pi_i\phi(\Gamma_{1}\times\cdots \times\Gamma_{j_0-1} \times\Gamma_{j_0+1} \times\cdots\times\Gamma_{s})$ commute elementwise with each other as well, and so the multiplication map
\begin{equation}\label{eq:presentation}
\pi_i\phi(\Gamma_{j})\times\pi_i\phi(\Gamma_{1}\times\cdots\times\Gamma_{j_0-1}\times\Gamma_{j_0+1}\times\cdots\times\Gamma_{s})\longrightarrow\Gamma_i
\end{equation}
is a well-defined epimorphism. The kernel of this homomorphism is (isomorphic to) a central subgroup of $\Gamma_i$, because every element of the kernel must clearly be of the form $(g,g^{-1})\in \pi_i\phi(\Gamma_{j_0})\times\pi_i\phi(\Gamma_{1}\times\cdots \times\Gamma_{j_0-1} \times\Gamma_{j_0+1} \times\cdots\times\Gamma_{s})$ and thus $g$ commutes with all elements in $\Gamma_i$. Since $C(\Gamma_i)$ is trivial, we conclude that $\pi_i\phi(\Gamma_{1}\times\cdots\times \Gamma_{j_0-1}\times \Gamma_{j_0+1}\times\cdots\times\Gamma_{s})$ is trivial, because $\Gamma_i$ is not decomposable as a direct product (and $\pi_i\phi(\Gamma_{j_0})$ is not trivial by assumption). This implies that $\pi_i\phi(\Gamma_{j})$ is trivial for all $j\in\{1,...,j_0-1,j_0+1,...,s\}$. 
Thus, we have that for each $i$ there exists a unique $\Gamma_{ij_0}$ such that 
\[
\phi\vert_{\Gamma_{ij_0}}\colon\Gamma_{ij_0}\longrightarrow\Gamma_i
\] 
is surjective and $j_0\neq j'_0$ whenever $i\neq i'$. In fact, $\phi$ permutes the factors of $\Gamma_1\times\cdots\times\Gamma_s$ and thus defines an element of the symmetric group $\mathrm{Sym}(s)$. This means that some power of $\phi$ is the identity element of $\mathrm{Sym}(s)$, i.e. there exists a $k$ such that $\phi^k(\Gamma_i)=\Gamma_i$ for all $i$ (and $\pi_i\phi^k\vert_{\Gamma_{i'}}$ is trivial for all $i'\neq i$). Since each $\Gamma_i$ is Hopfian, we deduce that 
\[
\phi^k\vert_{\Gamma_i}\colon\Gamma_i\longrightarrow\Gamma_i
\]
is an isomorphism for all $i$ and so $\phi^k$ is an isomorphism. This implies that $\phi$ is an isomorphism, completing the proof.
\end{proof}

The proof of Corollary \ref{c:Hopf} is now straightforward:

\begin{proof}[Proof of Corollary \ref{c:Hopf}]
Let $M_1,...,M_s$ be closed oriented aspherical manifolds whose fundamental groups are Hopfian and have trivial center. Suppose 
\[
f\colon M_1\times\cdots\times M_s\longrightarrow M_1\times\cdots\times M_s
\]
is a map of degree $\pm 1$. Then $\pi_1(f)$ is surjective, and since $\pi_1(M_1\times\cdots\times M_s)$ is Hopfian by Theorem \ref{t:Hopf}, we deduce that $\pi_1(f)$ is an isomorphism. Since $M_1\times\cdots\times M_s$ is aspherical, we conclude that $f$ is a homotopy equivalence.
\end{proof}

\subsection{Proof of Corollary \ref{c:extendGL}}

Suppose now that $\Gamma_1,...,\Gamma_s$ are as in Corollary \ref{c:extendGL}, i.e. $\Out(\Gamma_i)$ is finite and $C(\Gamma_i)=1$ for each $i=1,...,s$. As we have seen in the  proof of Theorem \ref{t:Hopf}, any epimorphism 
\[
\phi\colon\Gamma_1\times\cdots\times\Gamma_s\longrightarrow\Gamma_1\times\cdots\times\Gamma_s
\]
permutes the factors of $\Gamma_1\times\cdots\times\Gamma_s$, defining a homomorphism from the automorphism group of  $\Gamma_1\times\cdots\times\Gamma_s$ to the symmetric group $\mathrm{Sym}(s)$. This homomorphism gives rise to a homomorphism 
\[
\Out(\Gamma_1\times\cdots\times\Gamma_s)\longrightarrow\mathrm{Sym}(s),
\]
whose kernel is $\Out(\Gamma_1)\times\cdots\times\Out(\Gamma_s)$. Since $\Out(\Gamma_i)$ is finite for all $i=1,...,s$, we conclude that $\Out(\Gamma_1\times\cdots\times\Gamma_s)$ is finite. Corollary \ref{c:extendGL} now follows from Theorem \ref{t:main}.

\section{Examples}\label{s:examples}

We end our discussion with a few examples illustrating Corollaries \ref{c:extendGL} and \ref{c:non-zeroEuler}. Most of those examples can be derived using Theorem \ref{resultGL}, however, according to Theorem \ref{t:main}, we do not need anymore to check all assumptions of Theorem \ref{resultGL}. Also, some of our examples point out that it is essential to look at the manifolds themselves, and not to their finite covers. 

\subsection{Non-zero Euler characteristic}

By a classical result of Gottlieb~\cite{Got}, the fundamental group of every aspherical manifold with non-zero Euler characteristic has trivial center. (Note, moreover, that the Euler characteristic is multiplicative under taking products and under passing to finite covers multiplied with the degree of the covering map.) Thus Corollary \ref{c:non-zeroEuler} gives examples of products of finitely many even-dimensional manifolds with any infranilmanifold which do not support Anosov diffeomorphisms. (Recall that odd-dimensional manifolds have vanishing Euler characteristic.) The computation of the Euler characteristic of aspherical manifolds is a long-standing topic in topology. Most notably, the Hopf conjecture asserts that $(-1)^k\chi(M)\geq 0$, where $\dim(M)=2k$. Several computations and estimates of the Euler characteristic of aspherical manifolds have been obtained (see for instance~\cite{Ed} and the references there for examples of aspherical manifolds with non-zero Euler characteristic), but their outer automorphism groups seem to be less understood. 

\begin{ex}
Closed oriented aspherical 4-manifolds with non-zero Euler characteristic and finite outer automorphism group include (real and complex) hyperbolic manifolds (see~\cite{Wall,Ko} and~\cite{B,BF,Paulin} for the non-vanishing of the Euler characteristic and the finiteness of the outer automorphism group respectively) and irreducible manifolds modeled on the $\mathbb{H}^2\times \mathbb{H}^2$ geometry (see~\cite{Wall} and~\cite{Mos} respectively). Thus Corollary \ref{c:non-zeroEuler} implies that the product of finitely many such 4-manifolds with any infranilmanifold does not support Anosov diffeomorphisms.
\end{ex}

\begin{rem}
Note that if the Euler characteristic of an aspherical manifold $M$ is not zero, then the intersection of all maximal nilpotent subgroups of $\pi_1(M)$ is trivial; cf.~\cite[Theorem 1.35 (2)]{Lue}.
\end{rem}

\subsection{Virtually polycyclic manifolds}

We now turn to examples of products $M\times N$, where $M$ has Euler characteristic zero. We especially deal with complete fundamental groups, i.e. groups whose center and outer automorphism group are both trivial.

\begin{ex}[Flat manifolds]\label{ex:flat}
Recall that a Bieberbach group $\Gamma$ is a torsion-free group defined by an extension
\[
1\to\Z^n\to\Gamma\to Q\to 1,
\]
where $Q$ is a finite group (called the holonomy group of $\Gamma$) and $\Z^n$ is a maximal Abelian subgroup of $\Gamma$. The corresponding closed aspherical manifold $M=\R^n/\Gamma$ is called flat manifold and has fundamental group $\Gamma$. A characterization for the existence of Anosov diffeomorphisms of flat manifolds is given in~\cite{Po}. Bieberbach groups with trivial center are discussed in~\cite{Sz2,Sz3} and with finite outer automorphism group in~\cite{Sz1,Sz4}. An example of a flat manifold $M$ with complete fundamental group is given in~\cite{Wa}. By Corollary \ref{c:extendGL}, the product of finitely many copies of $M$ with any infranilmanifold does not support Anosov diffeomorphisms. 
\end{ex}

\begin{rem}
Note that a flat manifold $M$ is virtually a torus $T^n$ which supports Anosov diffeomorphisms and has center $\Z^n$. Thus, for a flat manifold $M$ whose fundamental group has trivial center and finite outer automorphism group, the product $M\times T^n$ does not support Anosov diffeomorphisms (by Theorem \ref{t:main}), but it is finitely covered by the $2n$-torus $T^n\times T^n$, which supports Anosov diffeomorphisms.
\end{rem}

\begin{ex}[Solvable manifolds]
Aspherical manifolds with virtually polycyclic (but not virtually Abelian) complete fundamental groups can be constructed building on~\cite{CR,CRW,RT}. By Corollary \ref{c:extendGL}, products of finitely many such manifolds with any infranilmanifold provide examples of virtually polycyclic manifolds (but not virtual tori) that do not support Anosov diffeomorphisms.

An interesting example of a 7-dimensional closed aspherical solvable manifold $M$ with complete fundamental group is given by Robinson~\cite{Ro} (see also~\cite{Ma}): Let $H$ be a torsion-free nilpotent group defined by 
\[
H = \langle a_1,a_2,a_3,a_4,a_5,a_6 \ | \ [a_1,a_2] =a_6, [a_2,a_3]=a_4, [a_3,a_1]=a_5\rangle.
\]
This group is clearly realised as the fundamental group of a 6-dimensional nilpotent manifold $F$, which is a $T^3$-bundle over $T^3$. Let the automorphism $\theta$ of $H$ given by 
\[
a_1\mapsto a_2, \ a_2\mapsto a_3, \ a_3\mapsto a_1a_2^{-1}, \ a_4\mapsto a_4a_5, \ a_5\mapsto a_6, \ a_6\mapsto a_4.
\]
Then the semi-direct product $H\rtimes_\theta\Z$ is a complete group and is realised as the fundamental group of a closed solvable aspherical manifold $M$ which is an $F$-bundle over the circle; see~\cite[Theorem 8]{Ro} and~\cite[Example 3.3]{Ma}. If $N$ is any closed infranilmanifold, then Corollary \ref{c:extendGL} implies that the product of finitely many copies of $M$ with $N$ is a solvable manifold that does not support Anosov diffeomorphisms.
\end{ex}

Note that virtually polycyclic groups are residually finite~\cite[Chapter 5]{Robook} and thus Hopfian. Moreover, according to the following lemma, which was pointed out to me by K. Dekimpe, together with Lemma \ref{l:maxtrivial}, the properties {\em ``the intersection of all maximal nilpotent subgroups of $\Gamma$ is trivial"} and {\em ``the center of $\Gamma$ is trivial"} are equivalent for torsion-free virtually polycyclic groups. Thus Theorems \ref{resultGL} and \ref{t:main} are equivalent for aspherical manifolds with virtually polycyclic fundamental groups.

\begin{lem}\label{l:polycyclic}
Let $\Gamma$ be a torsion-free virtually polycyclic group. If the center of $\Gamma$ is trivial, then the intersection of all maximal nilpotent subgroups of $\Gamma$ is trivial.
\end{lem}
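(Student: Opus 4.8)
I would argue the converse of Lemma~\ref{l:maxtrivial} by contradiction, in the form: if $\Gamma$ is torsion-free virtually polycyclic and $I:=\bigcap_i N_i$ is nontrivial, where the $N_i$ are the maximal nilpotent subgroups of $\Gamma$, then $C(\Gamma)\neq 1$. Two standing reductions: since inner automorphisms of $\Gamma$ permute the $N_i$, the subgroup $I$ is normal in $\Gamma$; and since $I$ is contained in some maximal nilpotent subgroup (one exists, as $\Gamma\neq 1$), $I$ is nilpotent. Because $\Gamma$ is virtually polycyclic it satisfies the maximal condition on subgroups, so $I$ is finitely generated, and it is of course torsion-free. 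Hence, if $I\neq 1$, its centre $A:=C(I)$ is a nontrivial finitely generated torsion-free abelian group, so $A\cong\Z^{r}$ with $r\geq 1$; moreover $A$ is normal in $\Gamma$ (being characteristic in $I$, which is normal), and, crucially, $A\subseteq N_i$ for \emph{every} $i$, since $A\subseteq I\subseteq N_i$. It then suffices to produce a nonzero vector of $A$ fixed by the conjugation action of $\Gamma$, as any such vector is central.

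The key step is to show that $\Gamma$ acts on $A\cong\Z^{r}$ by \emph{unipotent} automorphisms, i.e.\ that the image of the conjugation homomorphism $\psi\colon\Gamma\to\mathrm{Aut}(A)=\mathrm{GL}_{r}(\Z)$ consists of unipotent matrices. Fix $g\in\Gamma$. Using the maximal condition again, the cyclic (hence nilpotent) subgroup $\langle g\rangle$ is contained in a maximal element of the set of nilpotent subgroups of $\Gamma$ containing it; such a maximal element is necessarily a maximal nilpotent subgroup $N$ of $\Gamma$, and $g\in N$. By the previous paragraph $A\subseteq N$ as well, so $\langle g,A\rangle\leq N$ is nilpotent, of some class $c$. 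Writing $A$ additively and letting $\sigma:=\psi(g)$ act by $a\mapsto gag^{-1}$, a direct computation gives $[a,\underbrace{g,\dots,g}_{k}]=(\sigma^{-1}-1)^{k}(a)$ for $a\in A$; since the $c$-fold such commutators lie in $\gamma_{c+1}(\langle g,A\rangle)=1$, we get $(\sigma^{-1}-1)^{c}=0$ on $A$, so $\sigma$, and therefore $\psi(g)$, is unipotent.

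Finally I would invoke Kolchin's theorem: any group of unipotent linear transformations of a nonzero finite-dimensional vector space has a common nonzero fixed vector. Applied to $\psi(\Gamma)\leq\mathrm{GL}_{r}(\Q)$, it produces $0\neq v\in\Q^{r}$ with $\psi(g)v=v$ for all $g\in\Gamma$; clearing denominators yields $0\neq w\in A=\Z^{r}$ with $gwg^{-1}=w$ for all $g\in\Gamma$, i.e.\ $1\neq w\in C(\Gamma)$. This contradiction shows $I=1$.

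I expect the main obstacle to be recognising that the only nilpotency information at hand — that $A$ lies together with each $g$ inside a single maximal nilpotent subgroup — is exactly equivalent to unipotency of $\psi(g)$, after which Kolchin closes the argument with no further input. The hypotheses then play transparent roles: virtual polycyclicity is used only through the maximal condition (finite generation of $I$, and the existence of a maximal nilpotent subgroup through each $g$), while torsion-freeness guarantees that $A$ is free abelian so that Kolchin applies over $\Q$; without it one would have to handle a possible finite $A$ separately. An alternative route via the Fitting subgroup $\mathrm{Fitt}(\Gamma)$ — using $I\leq\mathrm{Fitt}(\Gamma)$ together with the inclusion of $C_{\Gamma}(\mathrm{Fitt}(\Gamma))$ in $\mathrm{Fitt}(\Gamma)$ — also seems feasible but appears to require more structure theory of polycyclic groups for no real gain.
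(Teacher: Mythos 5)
Your proof is correct and follows essentially the same route as the paper's: produce a nontrivial normal free-abelian subgroup of $\Gamma$ lying in every maximal nilpotent subgroup, show the conjugation action of $\Gamma$ on it is unipotent because each $\gamma$ sits together with that subgroup inside a single maximal nilpotent subgroup, and extract a common fixed vector to contradict $C(\Gamma)=1$. The only cosmetic difference is that the paper obtains its abelian subgroup as $K=G\cap C(H)$ with $H$ the Fitting subgroup, whereas you take $C(I)$ directly; your write-up also makes explicit the commutator computation behind unipotence and the Kolchin step, both of which the paper leaves implicit.
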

\begin{proof}
Let $G$ be the intersection of all maximal nilpotent subgroups of $\Gamma$ and suppose that $G\neq 1$. 
Denote by $H$ the unique maximal nilpotent normal subgroup of $\Gamma$ (which is called the Fitting subgroup of $\Gamma$). Since $G$ is a normal subgroup of $\Gamma$, we deduce that $G$ lies in $H$. Moreover, since $G$ is normal in $H$ and not trivial, the intersection
\[
K:=G\cap C(H)
\]
is not trivial. Clearly $K$ is normal in $\Gamma$ and Abelian, that is, $K$ is isomorphic to $\Z^k$ for some $k>0$, because $\Gamma$ is torsion-free.

Let $\mathrm{Aut}(K)$ be the automorphism group of $K$ and define
\begin{eqnarray*}
\varphi\colon\Gamma & \longrightarrow & \mathrm{Aut}(K)\\
\gamma & \mapsto & \varphi(\gamma)(x):=\gamma x\gamma^{-1}, \ \gamma\in\Gamma, x\in K.  \ 
\end{eqnarray*}   
Since $K\cong \Z^k$, we can view each $\varphi(\gamma)$ as a matrix in $\mathrm{GL}(k,\Z)$. For any $\gamma\in\Gamma$, there is a maximal nilpotent subgroup $\Delta$ of $\Gamma$ such that $\gamma\in\Delta$.
Now, $K$ is a normal subgroup of the nilpotent group $\Delta$, and so $\Delta$ acts unipotently by conjugation on $K$. In particular, the matrix $\varphi(\gamma)$ is unipotent. 
Thus, $\varphi(\Gamma)$ is a group of unipotent matrices, and so there is a non-trivial element $x_0\in K$ on which any $\gamma\in\Gamma$ acts trivially, i.e. $\varphi(\gamma)(x_0)=x_0$. This means that $x_0\in C(\Gamma)$, completing the proof.
\end{proof}

Since the examples of~\cite{GL} are groups whose intersection of all maximal nilpotent subgroups is trivial (and thus their center is trivial by Lemma \ref{l:maxtrivial}), we end with the following question:

\begin{que}
Which fundamental groups of aspherical manifolds have trivial center but not trivial intersection of all maximal nilpotent subgroups? 
\end{que}

\bibliographystyle{amsplain}

\end{document}